\renewcommand\l@subsection{\@tocline{2}{0pt}{2pc}{5pc}{}}
\newcommand{\R}{{\mathbb R}}
\newcommand{\hofiber}{\operatorname{hofiber}}
\newcommand{\holim}{\operatorname{holim}}
\newcommand{\tfiber}{\operatorname{tfiber}}
\newcommand{\Emb}{\operatorname{Emb}}
\newcommand{\Imm}{\operatorname{Imm}}
\newcommand{\rImm}{\operatorname{rImm}}
\newcommand{\Ho}{\operatorname{H}}
\newcommand{\conf}{\operatorname{Conf}}
\newcommand{\Conf}{\operatorname{Conf}}
\newcommand{\rConf}{\operatorname{rConf}}
\newcommand{\calX}{{\mathcal{X}}}
\newcommand{\calY}{{\mathcal{Y}}}
\newcommand{\calZ}{{\mathcal{Z}}}
\newcommand{\calC}{{\mathcal{C}}}
\newcommand{\calO}{{\mathcal{O}}}
\newcommand{\calP}{{\mathcal{P}}}
\newcommand{\Top}{\operatorname{Top}}
\theoremstyle{plain}
\newtheorem{thm}{Theorem}[section]
\newtheorem{prop}[thm]{Proposition}
\newtheorem{lemma}[thm]{Lemma}
\newtheorem{cor}[thm]{Corollary}
\theoremstyle{definition}
\newtheorem{defin}[thm]{Definition}
\newtheorem{example}[thm]{Example}
\newtheorem{def/ex}[thm]{Definition/Example}
\theoremstyle{remark}
\newtheorem{rem}[thm]{Remark}
\newcommand{\refS}[1]{Section~\ref{S:#1}}
\newcommand{\refT}[1]{Theorem~\ref{T:#1}}
\newcommand{\refC}[1]{Corollary~\ref{C:#1}}
\newcommand{\refP}[1]{Proposition~\ref{P:#1}}
\newcommand{\refD}[1]{Definition~\ref{D:#1}}
\newcommand{\refL}[1]{Lemma~\ref{L:#1}}
\newcommand{\refR}[1]{Remark~\ref{R:#1}}
\newcommand{\refEx}[1]{Example~\ref{Ex:#1}}
\begin{document}


\title[]{Low stages of the Taylor tower for $r$-immersions}


\author{Bridget Schreiner}
\address{Department of Mathematics, University of Notre Dame, Notre Dame, IN}
\email{bschrein@nd.edu}

\author{Franjo \v Sar\v cevi\'c}
\address{Department of Mathematics, University of Sarajevo, Bosnia and Herzegovina}
\email{franjo.sarcevic@live.de}

\author{Ismar Voli\'c}
\address{Department of Mathematics, Wellesley College, Wellesley, MA}
\email{ivolic@wellesley.edu}
\urladdr{ivolic.wellesley.edu}

\subjclass[2010]{Primary: 57R42; Secondary: 55R80, 57R40}
\keywords{Calculus of functors, manifold calculus, Taylor tower, $r$-immersions, embeddings, immersions, configuration space, subspace arrangement}
%


\begin{abstract}
We study the beginning of the Taylor tower, supplied by manifold calculus of functors, for the space of $r$-immersions, which are immersions without $r$-fold self-intersections.  We describe the first $r$ layers of the tower and discuss the connectivities of the associated maps.   We also prove several results about $r$-immersions that are of independent interest.
\end{abstract}
\maketitle
\tableofcontents

\parskip=4pt
\parindent=0cm


\section{Introduction}\label{S:Intro}


Let $M$ and $N$ be smooth manifolds.  An \emph{$r$-immersion} of $M$ in $N$ is an immersion that has no $r$-fold self-intersections, i.e.~no $r$ points of $M$ are mapped to the same point in $N$.  The purpose of this paper is to initiate the study of  $\rImm(M,N)$, the space of $r$-immersions of $M$ in $N$ (see \refD{Spaces}), using manifold calculus of functors.  

This theory, due to Goodwillie and Weiss \cite{GW:EI2, W:EI1} (see also \cite{M:MfldCalc} and \cite[Section 10.2]{MV:Cubes} for overviews), studies contravariant functors $F\colon \calO(M)\to \calC$ where $\calO(M)$ is the poset of open subsets of $M$ and $\calC$ is usually $\Top$, the category of topological spaces (but it can be the category of spectra, chain complexes, etc.).  The theory produces a \emph{Taylor tower} of functors and natural transformations
$$
F(-)\longrightarrow\big(T_\infty F(-)\to\cdots\to T_kF(-)\to T_{k-1}F(-)\to\cdots \to T_0F(-)  \big)
$$
whose stages $T_kF(-)$ approximate $F$ in a suitable ``polynomial'' sense (see \refS{ManifoldCalculus}).  The hope is that the Taylor tower converges to $F$, namely that there is an equivalence between $F$ and $T_\infty F$, the inverse limit of the tower.

The main functor to which manifold calculus has been applied with great success is the embedding functor (in fact, this functor is the motivation for the development of the theory).  Namely, the space of embeddings $\Emb(M,N)$ can be regarded as a contravariant functor on $\calO(M)$ since an inclusion of open subsets of $M$ gives a restriction map of embedding spaces.  An important and deep result \cite{GK, GW:EI2} states that 
\begin{equation}\label{E:EmbConnectivities}
\text{the map \ $\Emb(M,N)\to T_k\Emb(M,N)$\ \   is\ \  $(k(n-m-2)-m+1)$-connected},
\end{equation}
where $m$ and $n$ are the dimensions of $M$ and $N$, respectively.
(This is restated later as \refT{EmbeddingsConvergence} and more details are given there.)  Thus if $n-m-2>0$, the Taylor tower converges to $\Emb(M,N)$. 

This convergence result has proven to be remarkably fruitful in the study of knot and link spaces \cite{ALTV, DwyerHess:LongKnots, LTV:Vass, MV:Links, S:OKS, PAST:LinkSSCollapse, V:FTK}, embeddings of long planes \cite{AT:LongPlanes2, BW:EmbConfCat}, more general embedding spaces \cite{ALV, M:Emb}, link maps and their Milnor invariants \cite{GM:LinksEstimates, M:LinkNumber, M:Milnor}, etc. A strong connection to operads has also been established and this point of view pervades much of the current work in manifold calculus.  Because of its success with embeddings, the foundation of the theory has also been expanded in various directions \cite{BW:CalculusSheaves, MV:Multi, T:MfldCalcComplex, T:Context-Free}.

When $r=2$, the space of $r$-immersions is precisely the space of embeddings since the condition is that such an immersion may not have double points, and this defines an embedding (for $M$ compact). Thus  $\Emb(M,N)$ is just the beginning of the filtration
$$
\Emb(M,N)=2\Imm(M,N)\subset3\Imm(M,N)\subset\cdots\subset \rImm(M,N)\subset\cdots\subset \Imm(M,N).
$$
The last space is that of immersions of $M$ in $N$.  All the spaces above are contravariant functors on $\calO(M)$ and it is thus natural to try to extend the study of embeddings to $r$-immersions using manifold calculus.  The beginning of such a study, from the operad point of view, already appears in \cite[Section 11]{DT:Overlapping}, although that work does not address the convergence question, which is our main concern here.

Another motivation for developing manifold calculus for $r$-immersions is recent developments in combinatorial topology.  One of the goals in this field is to understand and control the self-intersections of maps $K\to\R^n$ where $K$ is an $m$-dimensional complex.  One of the best-known of this family of questions is the \emph{Tverberg Conjecture}:   For $r\geq 2$  and $n\geq 1$, any map $f\colon \Delta^{(r-1)(n+1)}\to \R^n$ maps points from $r$ disjoint faces to the same point.  This conjecture has been disproved by Frick \cite{Frick:Tverberg} who uses work of Mabillard and Wagner \cite{MW:TverbergI, MW:TverbergII} on the generalized Whitney trick that gives a way of resolving self-intersection points and formulates obstructions for doing this.

On the other hand, the Taylor tower for embeddings has been shown to capture and classify obstructions for resolving self-intersections \cite{GKW:EmbDisjSurg, M:Emb} and for turning an immersion into an embedding. The corresponding question of turning an immersion into an $r$-immersion would then have a home in the Taylor tower for $r$-immersions, 
as consequently the Tverberg Conjecture might naturally live there as well.  Futhermore, the machinery of manifold calculus might provide new context for many related ``Tverberg--vanKampen--Flores-type'' questions.


However, the tower studied here is still one step removed from Tverberg-type problems due to the condition in such problems that the self intersections should come from ``far away''.  Therefore, what is needed after the Taylor tower $r$-immersions is understood is a development of the tower for the subspace of $r$-immersions of a manifold, now with a triangulation, given by those self-intersections that come from disjoint simplices (these are sometimes in the literature called \emph{almost} $r$-immersions).  A recent extension of manifold calculus to simplicial complexes \cite{T:MfldCalcComplex} should be relevant here.

We also expect that the Taylor tower for $r$-immersions will connect to work of Salikhov \cite{S:MultipleImmersions} on obstructions to the existence of a homotopy from an immersion to an $r$-immersion. 

\subsection{Main results and organization}

The results of this paper are summarized in the following diagram.

{\footnotesize
\begin{equation}\label{E:MainDiagram}
\xymatrix@=16pt{
       &  &  T_{\infty}\rImm(M,N)\ar[d] & \\
       &  &   \vdots \ar[d]  \\
 \rImm(M,N)\ar[uurr]\ar[rr]\ar@<0.5ex>[drr]^(0.6){(r-1)n-rm-1}\ar@/_1pc/[dddrr]^-{(r-1)n-rm-1} \ar@/_2pc/[ddddrr]_-{(r-1)n-rm-1}  &  &    T_r\rImm(M,N)\ar[d]^-{(r-1)n-rm-1}  & L_r\rImm(M,N); \text{\footnotesize $(r-1)n-rm-2$}  \ar[l]\\
       &  &    T_{r-1}\rImm(M,N) \ar[d]^-\sim & L_{r-1}\rImm(M,N)\sim * \ar[l]\\
            &  &   \vdots \ar[d]^-\sim & \\
       &  &     T_{2}\rImm(M,N)\ar[d]^-\sim & L_{2}\rImm(M,N)\sim * \ar[l] \\
                &  &   T_1\rImm(M,N)\simeq\Imm(M,N)\ar[d] & \\
                && \ast &
}
\end{equation}
}

The spaces $L_k\rImm(M,N)$ are the (homotopy) fibers of the maps $T_k\rImm(M,N)\to T_{k-1}\rImm(M,N)$.  The numbers in the diagram are connectivities of maps and, in the case of $L_r\rImm(M,N)$, the connectivity of that space.  The first stage of the Taylor tower is a one-point space by definition (see \refD{TaylorStage}).  The equivalence between $T_1\rImm(M,N)$ and $\Imm(M,N)$ is the content of \refP{T_1rImm=Imm}.  \refP{L_krImmContractible} then shows that $L_k\rImm(M,N)$ is contractible for $2\leq k\leq r-1$, from which it follows (\refT{T_kSameFork<r}) that there are equivalences  
$T_k\rImm(M,N)\to T_{k-1}\rImm(M,N)$ for  $2\leq k\leq r-1$.  We also show in \refC{rImm->ImmConnectivity}  that the map $\rImm(M,N)\to \Imm(M,N)$ is $((r-1)n-rm-1)$-connected, and so all the connectivities of the maps $\rImm(M,N)\to T_k\rImm(M,N)$, $2\leq k\leq r-1$, follow.  Lastly, in \refP{L_rrImmConnectivity} we exhibit the connectivity of $L_r\rImm(M,N)$.

There are two main lines of arguments we employ in our proofs.  One is general position and transversality, which are well-known topics and we use them in a basic way.  The other ingredient is homotopy limits of cubical diagrams; these techniques are central in functor calculus, but are less known so we review them in \refS{Cubes}.  

We also provide a review of manifold calculus and the way it applies to the embedding functor in \refS{ManifoldCalculus}.  This section is of independent interest since it provides a short path through the theory while supplying many references for further reading.

Also of independent interest are several results that have to do with spaces of $r$-immersions and $r$-configuration spaces, such as  \refT{rImm->(r+1)ImmConnectivity},  \refP{r+1R^n->rR^n}, and \refL{N-diagonal->N}.  We hope that some readers will find these useful regardless of their interest in calculus of functors.

Mentioned above are $r$-configuration spaces, which are central to our story.  These are  configuration spaces where up to $r-1$ points are allowed to be the same.  We will say more about them in \refS{Configurations} but it is worth noting that these are generally difficult and worthy of investigation in their own right.  In fact, the reason why our results stop at the $r^{th}$ stage of the Taylor tower is that this is the range in which $r$-configuration spaces are very simple.  

Some ideas and future directions of investigation of the higher stages of the Taylor tower for $r$-immersions are given in \refS{HigherStages}.





\subsection*{Acknowledgements} The third author would like to thank Tom Goodwillie, Sadok Kallel, and Rade \v Zivaljevi\'c  for helpful conversations, as well as the Simons Foundation for its support.



\section{Spaces of embeddings, immersions, and $r$-immersions}\label{S:Spaces}


Suppose $M$ and $N$ are smooth manifolds of dimensions $m$ and $n$, respectively.  We will assume throughout that $m\leq n$.  Let $\calC^\infty(M,N)$ be the space of smooth maps from $M$ to $N$, topologized using the Whitney $\calC^\infty$ topology.  The spaces in the following definition are all topologized as subspaces of $\calC^\infty(M,N)$.

\begin{defin}\label{D:Spaces}  \ 
\vspace{-6pt}
\begin{itemize}
\item An \emph{embedding} of $M$ in $N$ is a smooth map $f\colon M\to N$ satisfying 
\begin{enumerate}
\item[(i)] $f$ is a homeomorphism onto its image, and
\item[(ii)] the derivative of $f$ is injective, i.e.~the map of tangent spaces $D_xf\colon T_xM\to T_{f(x)}N$ is an injection for all $x\in M$.
\end{enumerate}
The space of embeddings of $M$ in $N$ is denoted by $\Emb(M,N)$.  A path in the space of embeddings is called an \emph{isotopy}.
\vspace{4pt}
\item An \emph{immersion} of $M$ in $N$ is a smooth map $M\to N$ satisfying (ii) above. The space of immersions of  $M$ in $N$ is denoted by $\Imm(M,N)$.  A path in the space of immersions is called a \emph{regular homotopy}.
\vspace{4pt}
\item An \emph{$r$-immersion} of $M$ in $N$ is an immersion of $M$ in $N$ that does not have $r$-fold intersections, i.e.~it satisfies the property that, for any subset of $r$ distinct points $R=\{x_1, ..., x_r\}$ of $M$, it is not a constant map when restricted to $R$.
The space of $r$-immersions of $M$ in $N$ is denoted by $\rImm(M,N)$. 
\end{itemize}
\end{defin}

For a compact manifold $M$, an embedding is an injective immersion.
It is then immediate from the above definition that, if $M$ is compact,  
$$
2\Imm(M,N)=\Emb(M,N),
$$
because the condition that an immersion have no double points is the same as requiring it to be injective.  
Much of the time, $M$ will for us indeed be a compact manifold and we will indicate when this assumption is being made.

Here is a result that is of independent interest.  Its consequence, \refC{rImm->ImmConnectivity}, will turn out to say something about the beginning of the Taylor tower for $r$-immersions (\refC{Connectivityk<r}).  The reader unfamiliar with the definition of the connectivity of a map should look ahead at \refD{k-connectedSpace}.

\begin{thm}\label{T:rImm->(r+1)ImmConnectivity}
Let $M$ and $N$ be smooth manifolds of dimensions $m$ and $n$, respectively.  Then the inclusion
$$
\rImm(M,N) \longrightarrow (r+1)\!\Imm(M,N)
$$
is $((r-1)n-rm-1)$-connected.
\end{thm}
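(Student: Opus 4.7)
The plan is to translate the connectivity statement into a parametric general-position problem and resolve it by multijet transversality applied to the small diagonal of $N^r$. By the standard characterization of connectivity, the inclusion is $k$-connected (for $k = (r-1)n - rm - 1$) if and only if, for every $j \leq k$, any map of pairs $(D^j, \partial D^j) \to ((r+1)\!\Imm(M,N), \rImm(M,N))$ can be deformed rel $\partial D^j$, through maps into $(r+1)\!\Imm(M,N)$, to a map landing in $\rImm(M,N)$. Unpacking the smooth-map topology, such a map of pairs is the same data as a smooth family $F\colon D^j \times M \to N$ of $(r+1)$-immersions whose restriction to $\partial D^j \times M$ consists of $r$-immersions, so the task becomes: perturb $F$, rel a neighborhood of $\partial D^j$ and through $(r+1)$-immersions, into a family of $r$-immersions.

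With this reduction in place, the first step will be to introduce the parametric evaluation
$$
\mathrm{ev}\colon D^j \times \Conf_r(M) \longrightarrow N^r, \qquad (s;x_1,\ldots,x_r) \longmapsto (F_s(x_1),\ldots, F_s(x_r)),
$$
and observe that $F_s$ is an $r$-immersion precisely when $\mathrm{ev}(\{s\}\times\Conf_r(M))$ avoids the small diagonal $\Delta_N = \{(y,\ldots,y)\} \subset N^r$, which is a submanifold of codimension $(r-1)n$. The hypothesis guarantees that $\mathrm{ev}$ already misses $\Delta_N$ over an open neighborhood of $\partial D^j$, so the problem is to extend this avoidance over the interior.

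The heart of the argument will be a parametric multijet transversality step: I will perturb $F$ by an arbitrarily small amount, rel an open neighborhood of $\partial D^j$, so that $\mathrm{ev}$ becomes transverse to $\Delta_N$. Since both the immersion condition and the $(r+1)$-fold-avoidance condition are open in the Whitney $\mathcal{C}^\infty$ topology, a sufficiently small perturbation keeps the entire homotopy inside $(r+1)\!\Imm(M,N)$, yielding a legitimate homotopy of pairs. Once $\mathrm{ev}\pitchfork \Delta_N$ is achieved, the dimension count closes the argument:
$$
\dim \mathrm{ev}^{-1}(\Delta_N) \;=\; j + rm - (r-1)n \;\leq\; k + rm - (r-1)n \;=\; -1,
$$
so the preimage is empty and each $F_s$ lies in $\rImm(M,N)$.

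The main obstacle I anticipate is the transversality step itself, because varying $F$ only induces variations of $\mathrm{ev}$ of a restricted, diagonal-like kind rather than arbitrary perturbations of a map $D^j \times \Conf_r(M) \to N^r$. What rescues the argument is that the $r$ points in any configuration in $\Conf_r(M)$ are pairwise distinct, so independent local perturbations of $F$ near each $x_i$ produce enough independent variation of $\mathrm{ev}$ to achieve transversality to any submanifold of $N^r$; this is the content of the multijet version of Thom's theorem, and I will need a parametric, rel-boundary form of it. Some additional care will be needed when $M$ is non-compact or has boundary (localizing the perturbation to a compact region where $\mathrm{ev}^{-1}(\Delta_N)$ could be nonempty), but the dimension count that produces the stated connectivity $(r-1)n - rm - 1$ is unaffected.
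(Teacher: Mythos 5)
Your proof takes essentially the same general-position route as the paper's: parametrize the lifting problem, form the $r$-fold evaluation map into $N^r$, perturb it transverse to the thin diagonal $\Delta$, and read off the connectivity from the codimension count $rm + j - (r-1)n < 0$. The only real difference is that you take the evaluation map on $D^j\times\Conf(r,M)$ rather than on $M^r\times S^k$ and invoke multijet transversality explicitly; this is in fact a small improvement, since the evaluation map lands in $\Delta$ identically on the diagonal of $M^r$, so the paper's assertion that $\tilde H^{-1}(\Delta)$ is empty tacitly requires the restriction to $r$-tuples of distinct points that your configuration space builds in from the start.
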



\begin{proof}
The proof is a standard general position argument.
Namely, consider a map
\begin{align*}
h\colon  S^k & \longrightarrow (r+1)\!\Imm(M,N) \\
 v & \longmapsto f
\end{align*}
We wish to find for which $k$ there exists a lift
\begin{equation}\label{E:LiftTorImm}
\xymatrix{
 & \rImm(M,N)\ar[d]\\
S^k 
\ar@{-->}[ur]
\ar[r]^-h & (r+1)\!\Imm(M,N)
}
\end{equation}
Consider the adjoint $H$ of $h$, i.e.
\begin{align*}
H\colon  M\times S^k & \longrightarrow N \\
 (x,v) & \longmapsto f(x)
\end{align*}
and the map

\begin{align}
\widetilde H\colon  \Conf(r,M)\times S^k & \longrightarrow N^r \label{E:AssociatedMap}\\
 (x_1, x_2, ..., x_r, v) & \longmapsto (f(x_1), f(x_2), ..., f(x_r)). \notag
\end{align}
Here $\Conf(r,M)$ is the configuration space of $r$ distinct point in $M$ (see \eqref{E:ConfSpace} for the precise definition).

Now we want to use the multi-jet transversality theorem (see, for example, \cite[A.2.23]{MV:Cubes}). We are looking at zeroth order jets, in which case the theorem says the following: Suppose that $Z$ is a submanifold of $\Conf(r,M)\times N^r$. Then any $f\colon M\to N$ can be approximated by a map (i.e.~it is homotopic to a map by a small homotopy) such that the associated map 
\begin{align*}
\Conf(r,M) & \longrightarrow \Conf(r,M)\times N^r \\
(x_1,...,x_r) & \longmapsto  (x_1,...,x_r,f(x_1),...,f(x_r))
\end{align*}
is transverse to $Z$. We are interested in this in the case when $Z$ is the product of $\Conf(r,M)$ and the (thin) diagonal $\Delta$ in $N^r$,  i.e.~the set of $r$-tuples $(x_1,...,x_r)\in N^r$ such that $x_1=x_2=\cdots=x_r$. The transversality can then also be expressed by saying that the map 
\begin{align*}
\Conf(r,M) & \longrightarrow N^r \\
(x_1,..., x_r) & \longmapsto  (f(x_1),...,f(x_r))
\end{align*}
is transverse to the diagonal. 

What we really need is  a statement about parametrized families of maps, i.e.~maps $M\times P\to N$ where the parameter manifold $P$ is for us a sphere $S^k$.  This can be done by making the following choice: Define a submanifold $Z$ of $\Conf(r,M\times P)\times N^r$ by $(((x_1,p_1),...,(x_r,p_r)),(y_1,...,y_r))$ such that $p_1=\cdots=p_r$ and $y_1=\cdots=y_r$. To say that a map $M\times P\to N$ has the associated map 
$$
\Conf(r,M\times P)\longrightarrow \Conf(r,M\times P)\times N^r 
$$
which is transverse to $Z$ is the same as saying that the map
$$
\Conf(r,M)\times P\longrightarrow N^r
$$
is transverse to the thin diagonal. This does not mean that for every $p$ in $P$ the map $\Conf(r,M)\to N^r$ is transverse to the diagonal. But the case we are interested in is when transversality does not hit the diagonal at all. This happens if $\dim(\Conf(r,M)\times P)=rm+\dim(P)$ is less than the codimension of the diagonal in $N^r$, which is $nr-n=n(r-1)$. In other words, if $\dim(P)<(r-1)n-rm$, then arranging, by a small homotopy, for $\Conf(r,M)\times P\to N^r$ to be transverse to the diagonal means arranging for it to not hit the diagonal at all. Then for each point in $P$, the map $\Conf(r,M)\to N^r$ of course does not hit it either.

When $P=S^k$, we thus have that, if $k<(r-1)n-rm$, the map \eqref{E:AssociatedMap} is transverse to the diagonal, which in turn means that, for all $v\in S^k$, we know $h(v)=f$ is an $(r+1)$-immersion that does not map any $r$ points $x_1$, $x_2$, ..., $x_r$  in $M$ to the same point in $N$.  But this precisely means that $h$ factors through $\rImm(M,N)$, i.e.~the dotted arrow in \eqref{E:LiftTorImm} exists.

This argument also works relatively, i.e.~it can be repeated for maps 
$$
S^k\times I\longrightarrow (r+1)\!\Imm(M,N).
$$
The difference now is that the map induced by the adjoint is $\conf(r,M)\times S^k\times I\to N^r$ and so the codimension of the preimage of the diagonal is  $rm+k+1-(r-1)n$. This means that the map misses the diagonal if $k<(r-1)n-rm-1$.

What we have thus shown is that the homotopy classes of maps of $S^k$ to $(r+1)\!\Imm(M,N)$ and $\rImm(M,N)$ are in bijective correspondence if $k<(r-1)n-rm-1$; the bijection is induced by the inclusion  $\rImm(M,N)\to (r+1)\!\Imm(M,N)$ and, as we have just shown, lifts of maps $S^k\to(r+1)\!\Imm(M,N)$ and $S^k\times I\to\Imm(M,N)$ to $\rImm(M,N)$.  The inclusion thus induces isomorphisms
$$
\pi_k(\rImm(M,N))\stackrel{\cong}{\longrightarrow}\pi_k((r+1)\!\Imm(M,N)), \ \ \ \text{for } k<(r-1)n-rm-1.
$$
In addition, since we have a lift of maps $S^k\to\Imm(M,N)$ for $k<(r-1)n-rm$, and in particular for $k=(r-1)n-rm-1$, we thus have a surjection
$$
\pi_k(\rImm(M,N))\twoheadrightarrow\pi_k((r+1)\!\Imm(M,N)), \ \ \ \text{for } k=(r-1)n-rm-1.
$$
Putting this together means precisely that the inclusion $\rImm(M,N) \to (r+1)\!\Imm(M,N)$
is $((r-1)n-rm-1)$-connected. 
\end{proof}

\begin{cor}\label{C:rImm->ImmConnectivity}
With the assumptions as in \refT{rImm->(r+1)ImmConnectivity}, the inclusion
$$
\rImm(M,N) \longrightarrow \Imm(M,N)
$$
is $((r-1)n-rm-1)$-connected.
\end{cor}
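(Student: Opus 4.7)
The plan is to use \refT{rImm->(r+1)ImmConnectivity} iteratively along the filtration
$$
\rImm(M,N) \to (r+1)\!\Imm(M,N) \to (r+2)\!\Imm(M,N) \to \cdots \to \Imm(M,N).
$$
For each $s \geq r$, \refT{rImm->(r+1)ImmConnectivity} (with $r$ replaced by $s$) says the inclusion $s\Imm(M,N) \to (s+1)\!\Imm(M,N)$ is $((s-1)n - sm - 1)$-connected. Since $m \leq n$ throughout the paper, the quantity $(s-1)n - sm - 1 = (r-1)n - rm - 1 + (s-r)(n-m)$ is non-decreasing in $s$ and bounded below by $(r-1)n - rm - 1$, which is exactly the connectivity we want.

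Since a composition of finitely many $k$-connected maps is $k$-connected, it suffices to reduce the claim to a finite segment of the filtration. The main obstacle is that this filtration is a priori infinite; this is where a compactness argument is needed. Concretely, for a smooth immersion $f$ of the compact manifold $M$, local injectivity combined with compactness of $M$ produces a uniform bound on the multiplicity of self-intersections of $f$, and the associated local injectivity scales vary lower-semicontinuously in the Whitney $C^\infty$ topology. Thus any continuous $h\colon S^k \to \Imm(M,N)$, and any homotopy $S^k \times I \to \Imm(M,N)$, has image in $s\Imm(M,N)$ for some finite $s$ (depending only on the map). This reduces the lifting problem to the finite chain from $\rImm(M,N)$ to $s\Imm(M,N)$, and the connectivity estimates combine as desired.

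A cleaner alternative, in case the compactness step turns out to be delicate, is to rerun the transversality proof of \refT{rImm->(r+1)ImmConnectivity} verbatim with $\Imm(M,N)$ in place of $(r+1)\!\Imm(M,N)$. That argument never uses the hypothesis that $h$ lands in $(r+1)\!\Imm$; all it needs is that $h$ lands in $\Imm$, so that a sufficiently small perturbation of the adjoint $H$ still consists of immersions (an open condition in $C^\infty(M,N)$), and the dimension count for $\tilde H^{-1}(\Delta)$ is identical. This would deliver the stated connectivity directly, bypassing the filtration entirely.
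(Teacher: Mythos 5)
Your ``cleaner alternative'' is exactly the paper's primary proof: the paper's one-line argument is precisely to rerun the transversality argument of \refT{rImm->(r+1)ImmConnectivity} with $\Imm(M,N)$ in place of $(r+1)\!\Imm(M,N)$, and you are right that nothing in that dimension count used the hypothesis that $h$ lands in $(r+1)\!\Imm$.

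Your main plan (iterate along the filtration $\rImm\subset (r+1)\!\Imm\subset\cdots\subset\Imm$, using the monotonicity of $(s-1)n-sm-1$ in $s$) is essentially a more careful version of the paper's ``alternative'' argument, which gestures at $\Imm$ being the colimit of the filtration and asserts that the connectivity of the map to the colimit is the infimum of the intermediate connectivities. You correctly identify that this step is not automatic and supply the missing ingredient: for \emph{compact} $M$, the injectivity scale of an immersion is bounded away from $0$ on compact families, so any map $S^k\to\Imm(M,N)$ (or homotopy $S^k\times I\to\Imm(M,N)$) factors through some $s\Imm(M,N)$, reducing to a finite composition of $((r-1)n-rm-1)$-connected maps. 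That argument is sound. The one caveat, and the reason the transversality rerun is the better proof: \refT{rImm->(r+1)ImmConnectivity} is stated for arbitrary $M$, not just compact $M$, and your compactness reduction genuinely fails when $M$ is noncompact (e.g.\ an immersion $\R\to\R^2$ with crossings accumulating at infinity is in no $s\Imm$, so $\Imm$ is not the union of the filtration). The transversality rerun works in that generality; the filtration argument does not.
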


\begin{proof}
The proof of \refT{rImm->(r+1)ImmConnectivity} goes through the same way with $(r+1)\!\Imm(M,N)$ replaced by $\Imm(M,N)$.  Alternatively, the connectivity number from \refT{rImm->(r+1)ImmConnectivity}, $((r-1)n-rm-1)$, does not decrease as $r$ goes to infinity since $m\leq n$ (a standing assumption since otherwise there are no immersions or embeddings of $M$ in $N$).  Since $\Imm(M,N)$ is the limit of the inclusions $\rImm(M,N) \to (r+1)\!\Imm(M,N)$ and since the least connectivity of those inclusions is hence $((r-1)n-rm-1)$, it follows by \refP{CompositionConnectivity} that $\rImm(M,N) \to\Imm(M,N)$ has the same connectivity.
\end{proof}

What follows immediately from \refC{rImm->ImmConnectivity} is that, as long as $(r-1)n-rm-1\geq 0$, the inclusion $\rImm(M,N) \to \Imm(M,N)$ is surjective on $\pi_0$.  In other words, we recover the following familiar result.
\begin{cor}
For $n>\frac{r}{r-1}m$, any immersion of $M$ in $N$ is regular homotopic (homotopic through immersions) to an immersion that has no $r$-fold self-intersections.
\end{cor}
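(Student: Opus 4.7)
The plan is to deduce this corollary directly from \refC{rImm->ImmConnectivity} by interpreting the connectivity of the inclusion in low degrees. First I would unpack the hypothesis: the condition $n > \frac{r}{r-1}m$ is equivalent to $(r-1)n > rm$, and since both sides are integers (as $m$, $n$, $r$ are integers), this is the same as $(r-1)n - rm \geq 1$, i.e.\ $(r-1)n - rm - 1 \geq 0$.

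Next I would invoke \refC{rImm->ImmConnectivity}, which asserts that the inclusion $\rImm(M,N) \to \Imm(M,N)$ is $((r-1)n-rm-1)$-connected. Combined with the previous step, this means the inclusion is at least $0$-connected. By the standard definition of $k$-connectedness for a map, being $0$-connected implies that the induced map on $\pi_0$ is surjective.

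Finally I would translate this surjectivity back into geometric language. Path components of $\Imm(M,N)$ are precisely regular homotopy classes of immersions (by definition of regular homotopy as stated in \refD{Spaces}), and similarly path components of $\rImm(M,N)$ are homotopy classes of $r$-immersions. Surjectivity of $\pi_0(\rImm(M,N)) \to \pi_0(\Imm(M,N))$ therefore says that every regular homotopy class of immersions contains an $r$-immersion, which is exactly the claim that any immersion $f \colon M \to N$ is regular homotopic to an immersion without $r$-fold self-intersections.

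There is essentially no obstacle here beyond the small bookkeeping of the inequality — the real work has already been done in \refT{rImm->(r+1)ImmConnectivity} and \refC{rImm->ImmConnectivity}. The only point requiring a moment of care is to observe that strict inequality of reals translates to $\geq 1$ for integer-valued quantities, which is what is needed to conclude $0$-connectedness (and hence $\pi_0$-surjectivity) rather than merely $(-1)$-connectedness.
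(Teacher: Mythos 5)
Your proposal is correct and follows exactly the same route as the paper: it specializes \refC{rImm->ImmConnectivity} to $\pi_0$, using the integrality of $(r-1)n - rm$ to translate the strict inequality $n > \frac{r}{r-1}m$ into $0$-connectedness, and then reads off surjectivity on $\pi_0$ as the existence of an $r$-immersion in each regular homotopy class. The paper presents the same observation in the sentence immediately preceding the corollary, with the integrality step left implicit.
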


\begin{rem}When $r=2$, namely when $\rImm(M,N)$ is the space of embeddings $\Emb(M,N)$, the previous result says that, for $n>2m$, any immersion of $M$ in $N$ is regular homotopic to an embedding of $M$ in $N$, and this is the well-known \emph{Whitney Easy Embedding Theorem}.
\end{rem}


\section{Configuration and $r$-configuration spaces}\label{S:Configurations}


The examples of embedding and $r$-immersion spaces that are most important for us are those of configuration spaces, which is the case when $M$ is a collection of points.  Let $\underline k=\{1,2,...,k\}$.  We then define the  \emph{configuration space of $k$ points in $N$} to be
\begin{equation}\label{E:ConfSpace}
\Conf(k,N):=\Emb(\underline k, N)\cong\{(x_1, x_2, ..., x_{k})\in N^{k} \colon x_i\neq x_j \text{ for } i\neq j\}.
\end{equation}
This space can be thought of as $N^k$ with all the diagonals (i.e.~the fat diagonal) removed.  A related space, and one that is central in this paper, is the  
\emph{$r$-configuration space of $k$ points in $N$} defined by
$$
\rConf(k,N):=\rImm(\underline k, N).
$$
This is the space of configurations of $k$ points in $N$ where at most $r-1$ of them can be equal.  In other words, this is $N^k$ with some, but not all, of the diagonals removed, and is for this reason sometimes called a \emph{partial configuration space}.  This space can also be thought of as the complement of the union of certain diagonals in $N^k$ and is hence an example of a complement of a \emph{subspace arrangement}.  

As illustrated by the following example, $r$-configuration spaces are simple in some cases, and it is precisely this simplicity that will allow us to describe the low stages of the Taylor tower for $\rImm(M,N)$ without too much difficulty.

\begin{example}\label{Ex:rImmersionsExamples}
If $k<r$, then 
\begin{equation}\label{E:k<rN}
\rConf(k, N)\cong N^k.
\end{equation}
This is because there is no restriction on the $k$ points being different, and so no diagonals are removed from $N^k$.  In particular, when $k<r$,
\begin{equation}\label{E:k<rR^n}
\rConf(k, \R^n)\cong (\R^n)^k\simeq \ast.
\end{equation}
If $k=r$, then 
\begin{equation}\label{E:k=rN}
\rConf(r, N)\cong N^r\setminus \Delta
\end{equation}
where $\Delta$ is as before the thin diagonal. This is because any proper subset of the $r$ points is allowed to equal in $\rImm(r, N)$, but not all of them. In particular, when $N=\R^n$, 
\begin{equation}\label{E:k=rR^n}
\rConf(r, \R^n)\cong (\R^n)^r\setminus \Delta \simeq S^{(r-1)n-1}.
\end{equation}
The homotopy equivalence is given by retracting $(\R^n)^r\setminus \Delta$ onto the orthogonal complement of $\Delta\setminus \{0\}$, which is $(\R^n)^{r-1}\setminus \{0\}$, and then normalizing to length one.
\end{example}

The space $\rConf(k,N)$ is in general  more difficult and less understood than $\Conf(k,N)$.  Its (co)homology is known (see Introduction of \cite{DT:Overlapping} for an overview of the literature dealing with the (co)homology of $\rConf(k,N)$), and it is known that its suspension is a wedge of spheres \cite[Corollary 3.10]{DT:Overlapping}.  In addition,  the connectivity and homotopy groups of $\rConf(k,N)$ through a range were studied in \cite{KS:DiagonalComplements}. From the point of view of subspace arrangements, the stable homotopy type of these spaces can be identified as 
the Spanier-Whitehead dual of
$$
\bigvee_{p\in P} \big(\Delta(P_{<p})\ast S^{d(p)-1}\big)
$$
where $P$ is the partition poset associated to the arrangement of the diagonals that have been removed and $d(p)$ is the dimension of the subspace corresponding to $p\in P$ \cite{ZZ:Arrangements}.

Spaces of $r$-configurations are central to the work here, as they are the building blocks for the Taylor tower for $\rImm(M,N)$.  Equally important are the projection maps between them and this is where much of the difficulty lies:  For ordinary configuration spaces, the projection maps
$$
\Conf(k+1,N) \longrightarrow \Conf(k,N)
$$
that forget a point are fibrations \cite{FN:ConfFibration}.  In the case $N=\R^n$, the fiber is even easily identified as $\R^n$ with $k-1$ points removed, which is homotopy equivalent to $\bigvee_{k}S^{n-1}$.

However, the corresponding projections of $r$-configuration spaces,
\begin{equation}\label{E:rConfProjection}
\rConf(k+1,N) \longrightarrow \rConf(k,N),
\end{equation}
are not fibrations, as illustrated in the following example.  

\begin{example}
Consider the projection
\begin{align*}
\operatorname{3Conf}(3,\R^n)& \longrightarrow \operatorname{3Conf}(2,\R^n) \\
(x_1,x_2,x_3) & \longmapsto (x_1,x_2)
\end{align*}
The fiber over a point $(x_1,x_2)\in\operatorname{3Conf}(2,\R^n)$ where $x_1\neq x_2$ is $\R^n$ since, in that fiber, $x_3$ can be anywhere in $\R^n$, including at $x_1$ or at $x_2$.  The fiber over a point $(x_1,x_2)$ where $x_1= x_2$ is $\R^n\setminus \{x_1\}\simeq S^{n-1}$ since $x_3$ can be anywhere except at the point $x_1= x_2$.  Since the fibers over two different points have different homotopy type, the map is not a fibration.
\end{example}

Understanding the connectivity of the projection \eqref{E:rConfProjection}, which is important for understanding the Taylor tower of $\rImm(M,N)$, therefore requires understanding its \emph{homotopy} fiber, and not just its fiber(s).  The former is unfortunately an unwieldy space.  In the case of $N=\R^n$, however, we at least have a handle on the connectivity of the projection map.

\begin{prop}\label{P:r+1R^n->rR^n}
Suppose $r\leq k<l$.  Then the map 
$$\rConf(l, \R^n)\longrightarrow \rConf(k, \R^n),
$$ 
given by forgetting $l-k$ configuration points, is $((r-1)n-1)$-connected.
\end{prop}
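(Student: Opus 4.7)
The plan is to reduce to the one-step case where a single point is forgotten and then apply a general position argument essentially parallel to the one in the proof of \refT{rImm->(r+1)ImmConnectivity}. Indeed, the forgetful map factors as
\begin{equation*}
\rConf(l, \R^n) \longrightarrow \rConf(l-1, \R^n) \longrightarrow \cdots \longrightarrow \rConf(k+1, \R^n) \longrightarrow \rConf(k, \R^n),
\end{equation*}
and every intermediate space $\rConf(p,\R^n)$ satisfies $p \geq k \geq r$. Since a composition of $((r-1)n-1)$-connected maps is again $((r-1)n-1)$-connected, it suffices to show that each single-step forgetful map $\rConf(p+1,\R^n) \to \rConf(p,\R^n)$ with $p \geq r$ is $((r-1)n-1)$-connected.

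For the one-step case, I would verify the connectivity by solving the standard lifting problem
\begin{equation*}
\xymatrix{
\partial D^d \ar[r]^-{\tilde g_0} \ar[d] & \rConf(p+1, \R^n) \ar[d] \\
D^d \ar[r]^-{g} \ar@{-->}[ur] & \rConf(p, \R^n)
}
\end{equation*}
for $d \leq (r-1)n - 1$. Writing $g(v)=(x_1(v),\ldots,x_p(v))$, the task is to extend the map $x_{p+1}\colon \partial D^d\to\R^n$ supplied by $\tilde g_0$ to a map $x_{p+1}\colon D^d\to\R^n$ whose graph in $D^d\times\R^n$ avoids, for every subset $I\subset\{1,\ldots,p\}$ of size $r-1$, the set
\begin{equation*}
B_I \, = \, \{(v,y)\in D^d\times\R^n \, \colon \, x_i(v)=y \text{ for all } i\in I\}.
\end{equation*}
After a small perturbation of $g$ rel $\partial D^d$, the map $v\mapsto (x_{i_1}(v),\ldots,x_{i_{r-1}}(v))$ becomes transverse to the thin diagonal of $(\R^n)^{r-1}$ (codimension $(r-2)n$), so each $B_I$ is at worst a manifold of dimension $d-(r-2)n$. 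A second transversality step perturbs $x_{p+1}$ rel $\partial D^d$ so that its graph, of dimension $d$, is transverse to each $B_I$ inside $D^d\times\R^n$; the resulting intersection has dimension $d+(d-(r-2)n)-(d+n)=d-(r-1)n$, which is negative precisely when $d\leq(r-1)n-1$. Hence the lift exists in this range. Applying this with $d=j$ gives surjectivity of $\pi_j(\rConf(p+1,\R^n))\to\pi_j(\rConf(p,\R^n))$ for $j\leq(r-1)n-1$, and applying it with $d=j+1$ to lift null-homotopies gives injectivity for $j\leq(r-1)n-2$, together yielding the claimed $((r-1)n-1)$-connectivity.

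The main obstacle is a bookkeeping one: both perturbations must be carried out \emph{relative to $\partial D^d$} so as to preserve the given lift $\tilde g_0$. This is not a serious difficulty because $\tilde g_0$ already takes values in $\rConf(p+1,\R^n)$, so the graph of $x_{p+1}|_{\partial D^d}$ misses each $B_I$ on the boundary; one performs the perturbations in the interior and glues via a collar of $\partial D^d$ using relative transversality, exactly as in the proof of \refT{rImm->(r+1)ImmConnectivity}. The finiteness of the collection $\{B_I\}$ lets one handle the $B_I$'s in turn, and the hypothesis $p \geq r$ ensures there is at least one subset $I$ of size $r-1$ (so the dimension count reflects the actual geometry rather than a vacuous condition).
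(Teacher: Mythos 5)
Your proof is correct, and it takes a genuinely different route from the paper's. The paper quotes two external facts: the $((r-1)n-2)$-connectivity of $\rConf(k,\R^n)$ for $k\geq r$ from \cite{KS:DiagonalComplements}, and the result from \cite{DT:Overlapping} that $\Ho_*(\rConf(k,\R^n))$ is generated by spherical classes pulled back along the forgetful maps to $\rConf(r,\R^n)\simeq S^{(r-1)n-1}$; it then gets surjectivity on $\Ho_{(r-1)n-1}$, hence on $\pi_{(r-1)n-1}$ via Hurewicz, while the lower homotopy groups vanish on both sides. You instead factor the projection into single-point-forgetting maps and run a hands-on transversality argument: extend the new coordinate $x_{p+1}$ over the disk while avoiding, for each $(r-1)$-element index set $I$, the locus $B_I$ where $x_{p+1}$ would complete an $r$-fold coincidence, with the dimension count $d-(r-1)n<0$ giving the bound. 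Your argument is self-contained and closer in spirit to the paper's own proof of \refT{rImm->(r+1)ImmConnectivity}, avoiding any appeal to the computed homology of partial configuration spaces; the price is the careful bookkeeping of two transversality perturbations rel boundary, which you handle correctly. The paper's approach is shorter at the expense of citing nontrivial inputs, and it also makes visible where the spherical generators of $\Ho_{(r-1)n-1}$ come from, which is useful elsewhere (e.g.\ in the discussion in \refS{HigherStages}). One small remark: your final sentence invokes $p\geq r$ to guarantee a nonempty family $\{B_I\}$, but $p\geq r-1$ already suffices for that; the hypothesis $r\leq k$ in the statement is what makes every intermediate $p$ satisfy $p\geq r$, and your argument would in fact also cover the step $\rConf(r,\R^n)\to\rConf(r-1,\R^n)$.
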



\begin{proof}
First look at the map 
\begin{equation}\label{E:HomologyPullbackMap}
\rConf(k, \R^n)\longrightarrow \rConf(r, \R^n)\simeq S^{(r-1)n-1}
\end{equation}
given by forgetting $k-r$ points (the equivalence comes from \eqref{E:k=rR^n}).
The space $\rConf(k, \R^n)$ is $((r-1)n-2)$-connected.   This follows from \refC{rImm->ImmConnectivity} with $M=\underline k$ and $N=\R^n$, in which case $\Imm(M,N)$ is contractible  (this result also appears as Corollary 4.7 in \cite{KS:DiagonalComplements}).  The sphere $S^{(r-1)n-1}$ has the same connectivity.  Now, the connectivity of the space $\rConf(k, \R^n)$, $k\geq r$, does not depend on $k$, so the same argument applies to the connectivity of the map \eqref{E:HomologyPullbackMap} with $l$ replacing $k$.

Furthermore, since $\rConf(k, \R^n)$ is a subspace arrangement, one can use the Goresky-MacPherson formulas \cite{GM:MorseTheory} to study thir cohomology, which is generated precisely by the spherical classes represented by the maps \eqref{E:HomologyPullbackMap}.  In particular, there is an injection
$$
\Ho^{(r-1)n-1}(\rConf(k, \R^n))\longrightarrow \Ho^{(r-1)n-1}(\rConf(l, \R^n))
$$
given by inclusion of generators.  Since everything is finitely generated, we then have a surjection on homology groups in the same degree, and, by the Hurewicz Theorem, a surjection on the first nontrivial homotopy group $\pi_{(r-1)n-1}$. 
%
\end{proof}

The above in particular provides the connectivity of the projection to one fewer points as in \eqref{E:rConfProjection}, but identifying the homotopy fiber of that map is more difficult.  The hope is that it is equivalent to a wedge of spheres, like the fiber for projections of ordinary configurations is. One can show that the \emph{suspension} of the homotopy fiber is indeed a wedge of spheres, which provides evidence that the homotopy fiber is as well.

In the case of $r$-configurations in arbitrary $N$, the situation is of course more complicated for various reasons, one of them being that we do not have as good of an understanding of the homology of this space.  The hope, however, is that the connectivity from \refP{r+1R^n->rR^n} remains the same (it does in the case of ordinary configuration spaces in $N$).


\section{Cubical diagrams and total fibers}\label{S:Cubes}


We will assume the reader is familiar with the language of homotopy limits, including homotopy fibers and homotopy pullbacks.  However, we will almost exclusively require these notions only in the case of cubical diagrams, and a source for that material is \cite{MV:Cubes} (foundational material on the subject can be found in \cite{CalcII}).

%
%
%
%

Let $\Top$ be the category of topological spaces and maps between them.  We will also sometimes use the same notation for the category of based spaces and maps and this will not cause confusion.

\begin{defin}\label{D:k-connectedSpace}\ 
\begin{itemize}
\item A nonempty space $X$ is \emph{$k$-connected} if $\pi_i(X,x)=0$ for all $0\leq i\leq k$ and for all choices of basepoint $x\in X$.  An infinitely connected space is \emph{weakly contractible}.
\item A map $f\colon X\to Y$ is \emph{$k$-connected} if its homotopy fiber (over any point $y\in Y$) is $(k-1)$-connected.  Equivalently, if $X\neq\emptyset$, $f$ is $k$-connected if, for all $x\in X$, the induced map
$$
f_*\colon \pi_i(X,x)\longrightarrow \pi_i(Y,f(x))
$$
is an isomorphism for all $i<k$ and a surjection for $i=k$.  An infinitely connected map is a \emph{weak equivalence}.
\end{itemize}
\end{defin}


 \begin{example}\ 
 \begin{itemize}
\item Path-connected spaces are 0-connected.
\item Simply-connected spaces are 1-connected.
\item The sphere $S^k$ is $(k-1)$-connected.  
\item A map between $k$-connected spaces is $(k-1)$-connected.
\item A map $X\to \ast$ is $k$-connected if and only if $X$ is $(k-1)$-connected.
\end{itemize}
 \end{example}
 
 For the proof of the following, see, for example, \cite[Proposition 2.6.15]{MV:Cubes}.

\begin{prop}\label{P:CompositionConnectivity}
Given maps $f\colon X\to Y$ and $g\colon Y\to Z$, 
\begin{itemize}
\item If $f$ and $g$ are $k$-connected, then $g\circ f$ is $k$-connected;
\item If $f$ is $(k-1)$-connected and $g\circ f$ is $k$-connected, then $g$ is $k$-connected;
\item If $g$ is $(k+1)$-connected and $g\circ f$ is $k$-connected, then $f$ is $k$-connected.
\end{itemize}
\end{prop}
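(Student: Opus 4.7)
The plan is to reduce all three statements to a single long exact sequence of homotopy groups. For any composable pair $X\xrightarrow{f}Y\xrightarrow{g}Z$ and any basepoint $z\in Z$, there is a natural homotopy fiber sequence
\[
\hofiber(f)\longrightarrow \hofiber(g\circ f)\longrightarrow \hofiber(g),
\]
obtained by regarding $\hofiber(g\circ f)$ as $X\times_Z^h \{z\}$ and checking that its projection to $\hofiber(g)=Y\times_Z^h\{z\}$ has homotopy fiber $\hofiber(f)$ over each chosen point of $\hofiber(g)$. Since by Definition~\ref{D:k-connectedSpace} a map is $k$-connected exactly when its homotopy fiber over every basepoint is $(k-1)$-connected, each hypothesis and each conclusion in the proposition translates into a vanishing statement $\pi_i(-)=0$ for $i\leq k-1$ applied to one of the three fibers above.

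Writing $F_f$, $F_g$, $F_{gf}$ for the three fibers, the associated long exact sequence
\[
\cdots\to\pi_{i+1}(F_g)\to\pi_i(F_f)\to\pi_i(F_{gf})\to\pi_i(F_g)\to\pi_{i-1}(F_f)\to\cdots
\]
yields the three bullets by direct inspection. For the first, $\pi_i(F_f)=\pi_i(F_g)=0$ for $i\leq k-1$ forces $\pi_i(F_{gf})=0$ in the same range. For the second, the segment $\pi_i(F_{gf})\to\pi_i(F_g)\to\pi_{i-1}(F_f)$ combined with $\pi_i(F_{gf})=0$ for $i\leq k-1$ and $\pi_{i-1}(F_f)=0$ for $i-1\leq k-2$ gives $\pi_i(F_g)=0$ for $i\leq k-1$. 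For the third, the segment $\pi_{i+1}(F_g)\to\pi_i(F_f)\to\pi_i(F_{gf})$ with $\pi_{i+1}(F_g)=0$ for $i+1\leq k$ and $\pi_i(F_{gf})=0$ for $i\leq k-1$ gives $\pi_i(F_f)=0$ for $i\leq k-1$.

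The main obstacle is not homotopical content but careful bookkeeping at the low end of the exact sequence: the definition of $k$-connectedness quantifies over all basepoints and, for nonempty source, also asks for surjectivity on $\pi_k$ rather than just vanishing below $k$. The basepoint issue is harmless because the fiber sequence above is natural in the choice of $z\in Z$, so the same exact-sequence argument runs over every component. The surjectivity issue is handled by running the exact sequence one index higher, where the relevant three-term piece now expresses "image is all of $\pi_k$" rather than "$\pi_k=0$"; the given connectivity hypotheses supply exactly the vanishing needed on the neighboring terms to upgrade the relevant map to a surjection in each of the three cases. The $\pi_0$ portion is the only place where exactness is merely exactness of pointed sets, but because $(k-1)$-connectedness at level $0$ just records nonemptiness and path-connectedness of the fiber, this is controlled by the same inspection.
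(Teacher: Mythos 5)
Your proof is correct and is essentially the argument given in the cited reference \cite[Proposition~2.6.15]{MV:Cubes}: the paper itself does not reprove this but refers there, and that proof also rests on the fiber sequence $\hofiber(f)\to\hofiber(gf)\to\hofiber(g)$ and its long exact sequence. One small remark: since \refD{k-connectedSpace} takes the fiber condition as the \emph{definition} of a $k$-connected map, your concluding paragraph about separately upgrading to surjectivity on $\pi_k$ is unnecessary --- once each fiber is shown to be $(k-1)$-connected you are already done, and the $\pi_k$-surjectivity characterization of the map $X\to Y$ is a consequence of that, not an extra thing to verify.
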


Let as before $\underline k = \{1,2,...,k\}$ and denote by $\calP(\underline k)$ and $\calP_0(\underline k)$ the set of all subsets of $\underline k$ and the set of all nonempty subsets of $\underline k$, respectively.  Both of these can be regarded as a category (poset) with inclusions as morphisms.  

\begin{defin}\label{D:Cube}\ 
\begin{itemize}
\item A \emph{$k$-cube}, or a \emph{cubical diagram of dimension $k$} is a (covariant) functor
\begin{align*}
\calX\colon \calP(\underline k) & \longrightarrow \Top \\
S &\longmapsto X_S
\end{align*}
\item A \emph{punctured $k$-cube}, or a \emph{punctured cubical diagram of dimension $k$} is the same except the domain is $\calP_0(\underline k)$.
\end{itemize}
\end{defin}

A 0-cube is a space, a 1-cube is a map of spaces, a 2-cube is a commutative square of spaces, etc.  A $k$-cube, for $k\geq 1$, can be regarded as a map, i.e.~a natural transformation, of $(k-1)$-cubes.  So a 1-cube is a map of 0-cubes (spaces),  a 2-cube (square) is a map of 1-cubes (maps), a 3-cube is a map of 2-cube (squares), and so on.

Removing the initial space $X_\emptyset$ from a cube of spaces leaves a punctured cube of spaces.  Furthermore, because $X_\emptyset$ maps into the rest of the cube, one also has a canonical induced map (see \cite[Section 5.4]{MV:Cubes} for details)
$$
a(\calX)\colon X_\emptyset \longrightarrow \underset{S\in \calP_0(\underline k)}{\holim} X_S
$$
where $\holim$ denotes the homotopy limit.

\begin{defin}\label{D:CartesianCube}
A $k$-cube is said to be $c$-cartesian if $a(\calX)$ is $c$-connected.  If $c=\infty$, the cube is \emph{(homotopy) cartesian}.
\end{defin}

A cube is \emph{based} if all the spaces in it are based and basepoints map to basepoints.  One way to base a cube is to choose a basepoint in $X_\emptyset$ and let it determine basepoints in the rest of the cube.  If a cube is based, so is the punctured cube associated to it, and this produces a natuaral basepoint in $ \underset{S\in\calP_0(\underline k)}{\holim} X_S$.

\begin{defin}\label{D:TotalFiber}
The \emph{total (homotopy) fiber of a based cube $\calX$}, $\tfiber(\calX)$, is 
$$
\tfiber(\calX)=\hofiber(a(\calX)),
$$
where the homotopy fiber is taken over the natural basepoint.
\end{defin}

A cube is thus $c$-cartesian if its total fiber is $(c-1)$-connected.

There is another convenient description of the total fiber of a cube in terms of \emph{iterated fibers}.  For the proof of the following, see \cite[Proposition 5.5.4]{MV:Cubes}.

\begin{prop}\label{P:IteratedFiber}
Let $\calX$ be a based cube.  For $\underline k=\emptyset$, we have $\tfiber(\calX)=X_\emptyset$.  For $\underline k\neq\emptyset$, regard $\calX$ as a map of $(k-1)$-cubes, $\calY\to\calZ$.  Then
$$
\tfiber(\calX)=\hofiber(\tfiber(\calY)\longrightarrow \tfiber(\calZ)).
$$
\end{prop}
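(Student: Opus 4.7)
My plan is by induction on the cube dimension $k$. The base case $k=0$ is essentially a matter of unpacking conventions: $\calP_0(\emptyset)$ is the empty diagram, whose homotopy limit is a point, so the canonical map is $a(\calX)\colon X_\emptyset\to\ast$ and $\tfiber(\calX)=\hofiber(X_\emptyset\to\ast)=X_\emptyset$, matching the first clause of the proposition.

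For the inductive step, first I would write $\calX$ as a map of $(k-1)$-cubes $\calY\to\calZ$: set $\calY_S=X_S$ for $S\subseteq\underline{k-1}$, set $\calZ_S=X_{S\cup\{k\}}$ for $S\subseteq\underline{k-1}$, with the natural transformation induced by the inclusions $S\subseteq S\cup\{k\}$. Next I would establish the homotopy-limit decomposition
$$
\holim_{\calP_0(\underline k)}\calX\;\simeq\;\holim\!\Bigl(\holim_{\calP_0(\underline{k-1})}\calY \longrightarrow \holim_{\calP_0(\underline{k-1})}\calZ \longleftarrow Z_\emptyset\Bigr),
$$
where the left arrow is induced by $\calY\to\calZ$ and the right arrow is the canonical map $a(\calZ)$. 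This decomposition comes from partitioning $\calP_0(\underline k)$ into the sub-posets of subsets not containing $k$ (which is $\calP_0(\underline{k-1})$, indexing $\calY$'s punctured cube) and subsets containing $k$ (which is $\calP(\underline{k-1})$, indexing $\calZ$'s full cube); the latter has an initial object $\{k\}$, so its homotopy limit collapses to $X_{\{k\}}=Z_\emptyset$, while the morphisms of $\calP_0(\underline k)$ crossing the partition encode precisely the natural transformation at the level of homotopy limits.

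Finally, the canonical map $a(\calX)\colon X_\emptyset\to\holim_{\calP_0(\underline k)}\calX$ factors through this homotopy pullback, with components the canonical map $a(\calY)\colon Y_\emptyset\to\holim_{\calP_0(\underline{k-1})}\calY$ and the natural-transformation entry $Y_\emptyset\to Z_\emptyset$. Since taking homotopy fibers commutes with homotopy pullbacks, I obtain
$$
\tfiber(\calX)=\hofiber(a(\calX))\;\simeq\;\hofiber\bigl(\hofiber(a(\calY))\longrightarrow\hofiber(a(\calZ))\bigr)=\hofiber\bigl(\tfiber(\calY)\to\tfiber(\calZ)\bigr),
$$
completing the induction. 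The hard part is the second step: justifying the homotopy-limit decomposition rigorously. This requires either expanding the definition of $\holim$ via the end/cosimplicial formula and manipulating the resulting diagrams, or invoking a cofinality argument that isolates the contribution of subsets containing $k$ using the initial object $\{k\}$ in that sub-poset. Once that identification is secured, the rest follows from purely formal properties of homotopy fibers and pullbacks.
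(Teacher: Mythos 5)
Note first that the paper does not prove this statement; it simply cites \cite[Proposition~5.5.4]{MV:Cubes}, so there is no ``paper's own proof'' to compare against, and your argument must stand on its own.

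Your overall strategy---induct on $k$ via a decomposition of $\holim_{\calP_0(\underline k)}\calX$ as a homotopy pullback---is reasonable, and the decomposition itself is correct (I verified it for small $k$); the honest way to justify it is as you suspect, via a homotopy right Kan extension along the poset map $\calP_0(\underline k)\to\calP_0(\underline 2)$ sending $S$ to $\{1\}$, $\{2\}$, or $\{1,2\}$ according to whether $S$ misses $k$, equals $\{k\}$, or properly contains $\{k\}$, together with a cofinality (homotopy-initial subcategory) argument identifying $\holim_{\calP_0(\underline k)\setminus\{\{k\}\}}\calX$ with $\holim_{\calP_0(\underline{k-1})}\calY$. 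You flag this as the hard part, which is fair.

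The genuine gap is in the last step. What ``homotopy fibers commute with homotopy pullbacks'' actually gives, after applying it to the constant cospan $Y_\emptyset=Y_\emptyset=Y_\emptyset$ mapping to your cospan, is a \emph{homotopy pullback} of three homotopy fibers:
$$
\hofiber(a(\calX))\;\simeq\;\holim\!\Bigl(\tfiber(\calY)\to\hofiber\bigl(Y_\emptyset\to\textstyle\holim_{\calP_0(\underline{k-1})}\calZ\bigr)\leftarrow\hofiber(Y_\emptyset\to Z_\emptyset)\Bigr),
$$
and not, in one step, the iterated fiber $\hofiber(\tfiber(\calY)\to\tfiber(\calZ))$ that you wrote. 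Passing from the former to the latter is exactly the assertion that the total fiber of the square with rows $Y_\emptyset\to Z_\emptyset$ and $\holim\calY_{\mathrm{punct}}\to\holim\calZ_{\mathrm{punct}}$ agrees with its iterated fiber---which is the $k=2$ case of the very proposition you are proving. So as written your induction cannot even get off the ground at $k=2$, and for $k\ge 3$ it silently invokes the $k=2$ case. The fix is to prove the square case separately (by a direct Fubini manipulation of iterated homotopy limits, or via the mapping-path-space model), treat it as an additional base case alongside $k=0,1$, and then your inductive step goes through for $k\ge 3$.
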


\begin{example}\label{Ex:ProductCubes1}
Let $X_1, ..., X_k$ be spaces.  Consider the cube $\calX\colon \calP(\underline k)\to\Top$ given by $\calX(S)=\prod_{i\notin S}X_i$ for $S\neq \underline k$ and $\calX(\underline k)=\ast$.  Then this cube is homotopy cartesian.  One way to see this is that each square face is homotopy cartesian, i.e.~a homotopy pullback, which implies that the cube is as well.  For details, see \cite[Example 5.4.21]{MV:Cubes}.
\end{example}

\begin{example}\label{Ex:ProductCubes2}
A related example is the $k$-cube where $\calX(\emptyset)=\prod_{i\in \underline k} X_i$, $\calX(\{i\})= X_i$, and $\calX(S)=\ast$ for all other $S\subset \underline k$.  This cube is also homotopy cartesian because regarding it as a map of cubes and  taking the fiber produces a $(k-1)$-cube of the sort from the previous example.  Since that cube is homotopy cartesian, so is this one.
\end{example}

For more on total homotopy fibers, see \cite[Section 5.4]{MV:Cubes}.


\section{Manifold calculus of functors and the Taylor tower for embeddings}\label{S:ManifoldCalculus}





In this section we review some of the main features of manifold calculus of functors.  For details, see \cite[Sections 10.2 and 10.3]{MV:Cubes} and \cite{M:MfldCalc}, in addition to the foundational papers \cite{GW:EI2, W:EI1}.  We will throughout pay attention to the functor $\Emb(M,N)$ since this is the one we wish to emulate in our analysis of $\rImm(M,N)$ in \refS{r-ImmersionsTaylorTower}.

Let $M$ and $N$ be smooth manifolds of dimensions $m$ and $n$, respectively.  Let
\begin{align*}
\mathcal{O}(M)= & \text{ category (poset) of open subsets of  $M$  with inclusions as morphisms}.
\end{align*}

Manifold calculus studies contravariant functors
$$
F\colon \mathcal{O}(M)\longrightarrow \Top
$$
that are 
\begin{itemize}
\item \emph{finitary}, namely for a sequence of open subsets $U_0\subset U_1\subset\cdots$, the canonical map
$
F(\cup_i U_i)\to \holim_i F(U_i)
$ 
is a homotopy equivalence; and
\item \emph{isotopy functors}, namely they take isotopy equivalences to homotopy equivalences
\end{itemize}

\begin{example}
Functors $\Imm(-,N)$, $\rImm(-,N)$, and $\Emb(-,N)$ are all finitary isotopy functors (see \cite[Proposition 1.4]{W:EI1} for $\Imm(-,N)$ and $\Emb(-,N)$; the argument for $\rImm(-,N)$ is same as for $\Imm(-,N)$).  They are contravariant on $\calO(M)$ since an inclusion gives a restriction map ``going the other way".
\end{example}


Now let $\mathcal{O}_k(-)$ be the subcategory $\mathcal{O}(-)$ consisting of open subsets of $M$ diffeomorphic to up to $k$ disjoint balls.  

\begin{defin}\label{D:TaylorStage}
Let $F$ be a finitary isotopy functor.  For $U\in \mathcal{O}(M)$, the $k$th stage of the Taylor tower is defined as
\begin{equation}\label{E:TaylorStage}
T_kF(U)=\underset{V\in \mathcal{O}_k(U)}{\holim} F(V).
\end{equation}
\end{defin}
The above homotopy limit is in some sense trying to reconstruct $F(U)$ from information about collections of its open balls (in category theory language, this is a \emph{homotopy right Kan extension}). 

We then get the \emph{Taylor tower of $F$} consisting of functors $T_kF$ with natural transformations between them and admitting a natural transformation from $F$:
$$
F(-)\longrightarrow\big(T_\infty F(-)\to\cdots\to T_kF(-)\to T_{k-1}F(-)\to\cdots \to T_0F(-)  \big).
$$
The transformations between the stages are induced by inclusions $\calO_{k-1}(U)\to\calO_{k}(U)$ and the transformation from $F$ to the \emph{stages} $T_kF$ of the tower by inclusions $\calO_{k}(U)\to\calO(U)$.  The functor $T_{\infty}F(-)$ is the inverse limit of the tower. 


Evaluating this diagram on $U\in\calO(M)$, we get a diagram of spaces with maps between the stages that are fibrations.

The definition of $T_kF(-)$ is not easy to work with.  But there is an alternative way to think about $T_kF$ in terms of cubical diagrams (at the expense of losing some functoriality properties).  We will not need this construction here, but the details of how to go from \refD{TaylorStage} to this cubical model can be found in \cite[Example 10.2.18]{MV:Cubes}.

The stages $T_kF(-)$ are polynomial in the following sense.

\begin{defin}\label{D:Derivative}
Let $B_1, ..., B_k$ be pairwise disjoint open balls in $M$.  Let $\calX$ be the $k$-cube given by
\begin{align*}
\calX \colon \calP(\underline k) & \longrightarrow \Top\\
S   & \longmapsto F\Big(\bigcup_{i\notin S}B_i\Big)
\end{align*}
Then define the \emph{$k^{th}$ derivative of $F$ at the empty set}, denoted by $F^{(k)}(\emptyset)$, to be the total fiber of $\calX$.
\end{defin}

\begin{defin}\label{D:Polynomial}
A contravariant functor $F\colon \calO(M)\to\Top$ is \emph{polynomial of degree $\leq k$} if, for all $U\in \calO(M)$ and for all pairwise disjoint nonempty closed subsets $A_1, ..., A_{k+1}$ of $U$, the cube 
\begin{align*}
\calX \colon \calP(\underline{k+1}) & \longrightarrow \Top\\
S   & \longmapsto F\Big(U-\bigcup_{i \in S}A_i\Big)
\end{align*}
is homotopy cartesian.
\end{defin}

The two definitions above are related by setting $U$ to be $k+1$ disjoint open balls and $A_i$ to be its components.  What falls out of the definitions then is that a polynomial functor of degree $\leq k$ has contractible derivatives of higher order.

\begin{example}\label{Ex:Polynomials}
The immersion functor $\Imm(-,N)$ is polynomial of degree $\leq 1$, or \emph{linear} \cite[Example 2.3]{W:EI1}.  The embedding functor $\Emb(-,N)$ is not polynomial of degree $\leq k$ for any $k$ \cite[Example 4.7]{M:MfldCalc}.
\end{example}

\begin{thm}[\cite{W:EI1}, Theorem 3.9]\label{T:T_kPolynomial}
Let $F\colon \calO(M)\to\Top$ be a contravariant finitary isotopy functor. The $k^{th}$ stage  $T_kF$ of the Taylor tower for $F$ is polynomial of degree $\leq k$.
\end{thm}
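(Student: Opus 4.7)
I verify \refD{Polynomial} directly for $T_kF$ in place of $F$. Fix $U \in \calO(M)$ and pairwise disjoint closed sets $A_1, \dots, A_{k+1} \subset U$; the task is to show that the $(k+1)$-cube
\[
\calY\colon \calP(\underline{k+1}) \longrightarrow \Top, \qquad \calY(S) = T_k F\Big(U \setminus \bigcup_{i \in S} A_i\Big),
\]
is homotopy cartesian. Unwinding \refD{TaylorStage}, each vertex $\calY(S) = \holim_{V \in \calO_k(U \setminus \bigcup_{i\in S}A_i)} F(V)$ is a homotopy limit over the full subposet of $\calO_k(U)$ consisting of those $V$ with $V \cap A_i = \emptyset$ for every $i \in S$. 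The strategy is to commute homotopy limits (Fubini), identify the resulting diagram via a contractibility argument, and then invoke a cofinality argument powered by a pigeonhole observation.

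Interchanging the outer and inner $\holim$ via the standard Fubini identification --- rewrite the iterated limit as a single $\holim$ over the Grothendieck construction of $S \mapsto \calO_k(U \setminus \bigcup_{i\in S}A_i)$ and then refiber over $\calO_k(U)$ --- yields a natural weak equivalence
\[
\holim_{S \in \calP_0(\underline{k+1})} \calY(S) \;\simeq\; \holim_{V \in \calO_k(U)} G(V), \qquad G(V) := \holim_{S \in \calP_0(\Sigma(V))} F(V),
\]
where $\Sigma(V) := \{\, i \in \underline{k+1} : V \cap A_i = \emptyset \,\}$ and the inner $\holim$ is constant with value $F(V)$. The nerve of $\calP_0(\Sigma(V))$ is the barycentric subdivision of a simplex, hence contractible, whenever $\Sigma(V) \neq \emptyset$; consequently $G(V) \simeq F(V)$ in that case, while $G(V) \simeq \ast$ when $\Sigma(V) = \emptyset$ (an empty $\holim$). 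Under these identifications the canonical map $T_kF(U) \to \holim_S \calY(S)$ is the map of homotopy limits induced by the natural transformation $F \Rightarrow G$ which is the identity on the locus $\Sigma(V) \neq \emptyset$ and the map to the point elsewhere.

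To conclude, I would exhibit a homotopy-cofinal full subposet $\calO_k^{\mathrm{small}}(U) \subset \calO_k(U)$ on which $F \Rightarrow G$ is an objectwise equivalence. Take $\calO_k^{\mathrm{small}}(U)$ to consist of those $V$ whose connected components are each small enough to meet at most one of the $A_i$. Pigeonhole then applies: such a $V$ has at most $k$ components and the $A_i$ are $k+1$ pairwise disjoint sets, so at least one $A_i$ is disjoint from $V$, giving $\Sigma(V) \neq \emptyset$. For cofinality, each $V \in \calO_k(U)$ admits an ambient-isotopy deformation onto arbitrarily small $V' \subset V$ (shrink each component), and the finitary plus isotopy hypotheses on $F$ ensure that $\calO_k^{\mathrm{small}}(U)$ computes the same homotopy limit as $\calO_k(U)$. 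Combined with the objectwise equivalence $F|_{\calO_k^{\mathrm{small}}} \simeq G|_{\calO_k^{\mathrm{small}}}$, this yields the required weak equivalence $T_kF(U) \xrightarrow{\;\simeq\;} \holim_S \calY(S)$ and proves $\calY$ is homotopy cartesian.

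The main obstacle I anticipate is the cofinality step: converting the bare finitary and isotopy hypotheses into the homotopy-cofinal smallness statement requires a concrete model for $\calO_k(U)$ (for instance a category of tame embeddings of disjoint unions of standard balls) in which ``shrinking components'' is a functorial deformation, together with a verification that inclusions of such small $V'$ into a given $V$ induce weak equivalences of $F(V) \to F(V')$ after passing to $\holim$. This is also precisely where the count $k+1$ is sharp: with only $k$ closed sets the pigeonhole would fail, and there is no reason for $T_kF$ to be polynomial of degree $\leq k-1$.
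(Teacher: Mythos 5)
The paper does not give a proof of this theorem; it cites \cite{W:EI1}, Theorem 3.9, so there is no in-paper argument to compare against. Your plan does track the essential ingredients of Weiss's original argument --- the Grothendieck/Fubini interchange, the contractibility of the nerve of $\calP_0(\Sigma(V))$ when $\Sigma(V)\neq\emptyset$, and the pigeonhole observation that $k+1$ pairwise disjoint closed sets cannot all meet a $V$ with at most $k$ components once those components have been shrunk. You have also correctly pinpointed where the teeth of the proof are: the cofinality step.

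However, that step is a genuine gap, and it is deeper than ``choose a concrete model for $\calO_k(U)$ and shrink.'' Two issues. First, the Fubini reindexing you state is not a vanilla interchange: the Grothendieck construction of $S\mapsto\calO_k\bigl(U\setminus\bigcup_{i\in S}A_i\bigr)$ projects only onto the subposet $\calA=\{V\in\calO_k(U):\Sigma(V)\neq\emptyset\}$, not onto all of $\calO_k(U)$, so the passage to $\holim_{V\in\calO_k(U)}G(V)$ with $G(V)\simeq\ast$ on the complement already requires the very cofinality claim you defer. Second --- and this is the point I think you are underestimating --- the standard homotopy cofinality criterion goes the \emph{wrong way} for your intended subposet. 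For a contravariant $F$ one computes $T_kF(U)=\holim_{\calO_k(U)^{\mathrm{op}}}F$, and to replace this by a homotopy limit over $\calD^{\mathrm{op}}\subset\calO_k(U)^{\mathrm{op}}$ via homotopy initiality one needs, for every $V\in\calO_k(U)$, the poset $\{W\in\calD:V\subset W\}$ to have contractible nerve. Both $\calA$ and your $\calO_k^{\mathrm{small}}(U)$ fail this: if $V$ meets all of the $A_i$, there is no larger $W$ in $\calA$, and no larger $W$ with small components, so the relevant comma category is empty. Thus the reduction cannot be a formal cofinality argument; it genuinely uses the isotopy hypothesis on $F$ (so that $F$ is insensitive to shrinking opens within an isotopy class) and the specific structure of the poset of tubular-neighborhood-like opens. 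Weiss's proof navigates exactly this via dedicated lemmas about $\calO_k(U)$ (comparing the homotopy limit over $\calO_k(U)$ to one over a ``good'' subposet using isotopy invariance), not via the off-the-shelf homotopy-initiality theorem, and without that input the argument as written does not close.
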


Polynomial functors can be characterized by what they do on balls.

\begin{thm}[\cite{W:EI1}, Theorem 5.1]\label{T:PolynomialCharacterization}
Suppose $F$ and $G$ are contravariant finitary isotopy functors from $\calO(M)$ to $\Top$ that are polynomial of degree $\leq k$.  Suppose $T\colon F\to G$ is a natural transformation that is an equivalence for all $U\in \calO_k(M)$.  Then $T$ is an equivalence for all $U\in \calO(M)$.
\end{thm}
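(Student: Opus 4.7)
The plan is to factor the problem through the $k$th Taylor stage $T_k$. The natural transformation $T\colon F\to G$ induces $T_kT\colon T_kF\to T_kG$, which at each $U\in\calO(M)$ is the map of homotopy limits $\holim_{V\in\calO_k(U)}F(V)\to\holim_{V\in\calO_k(U)}G(V)$. The hypothesis is exactly that $T$ restricts to an objectwise equivalence of diagrams on $\calO_k(U)\subseteq\calO_k(M)$, and homotopy limits preserve objectwise equivalences, so $T_kT(U)$ is a weak equivalence for every $U\in\calO(M)$.

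Thus the theorem reduces to a key lemma: whenever $H$ is polynomial of degree $\leq k$, the canonical map $H\to T_kH$ is a weak equivalence. Applying this to both $H=F$ and $H=G$ yields the commutative square
\[
\xymatrix{
F(U)\ar[r]^{T}\ar[d]^{\simeq} & G(U)\ar[d]^{\simeq} \\
T_kF(U)\ar[r]^-{\simeq} & T_kG(U)
}
\]
in which three edges are weak equivalences, forcing the top edge to be one as well.

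For the key lemma I would argue by induction on the complexity of $U$. The base case is $U\in\calO_k(M)$: here $U$ is a terminal object of $\calO_k(U)$, so the homotopy limit defining $T_kH(U)$ collapses to $H(U)$ and the canonical map is tautologically an equivalence. For the inductive step, exploit that both $H$ (by hypothesis) and $T_kH$ (by \refT{T_kPolynomial}) are polynomial of degree $\leq k$. Choosing $k+1$ pairwise disjoint closed subsets $A_1,\ldots,A_{k+1}\subset U$ whose removals $U-\bigcup_{i\in S}A_i$ for nonempty $S$ lie in the inductive range, the polynomial property identifies $H(U)$ with the homotopy limit of the punctured $(k+1)$-cube $S\mapsto H(U-\bigcup_{i\in S}A_i)$, and similarly for $T_kH(U)$. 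The inductive hypothesis supplies termwise equivalences between these two punctured cubes, and these propagate to an equivalence $H(U)\to T_kH(U)$. The finitary hypothesis then extends the conclusion from $U$ built by finitely many such steps to an arbitrary $U$ via increasing unions.

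The main obstacle is arranging the induction to terminate uniformly at objects of $\calO_k(M)$. Arbitrary open subsets of $M$ admit no canonical handle decomposition, and the $A_i$ must be chosen to strictly decrease a well-founded complexity measure while remaining pairwise disjoint as required by \refD{Polynomial}. This combinatorial bookkeeping, carried out carefully in \cite{W:EI1}, is the technical heart of the argument.
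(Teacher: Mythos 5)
The paper cites this theorem from Weiss's paper \cite{W:EI1} rather than proving it, so there is no in-paper proof; I will compare your sketch against the standard literature argument.

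Your high-level structure is sound: reducing to the key lemma that $H \to T_kH$ is an equivalence for any polynomial $H$ of degree $\leq k$, and then deducing the theorem from the commutative square, is a valid reorganization. The observation that $T_kT$ is an objectwise equivalence because homotopy limits preserve termwise equivalences is correct, and the base case (that $U$ is terminal in $\calO_k(U)$ when $U\in\calO_k(M)$, so $T_kH(U)\simeq H(U)$) is fine. But notice that your key lemma is itself a special case of the theorem you are proving — apply the theorem to the canonical map $H\to T_kH$ — so the reduction does not lessen the difficulty; it merely isolates where it lives. One minor caution: you invoke \refT{T_kPolynomial}, which in Weiss's paper is established \emph{after} the present theorem, so one must check (as is the case) that its proof does not secretly use the result being proved.

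The genuine gap, which you candidly flag, is the well-foundedness of the induction. This is not bookkeeping one can wave at: it is the entire content of the proof. The geometric input that makes it work is a handle-decomposition argument. One first restricts to open $U$ that are interiors of compact handlebodies and inducts on the pair (top handle index $q$, number of index-$q$ handles). For the inductive step one chooses $k+1$ pairwise disjoint closed thickened disks $A_1,\dots,A_{k+1}$ in the core of a top handle, chosen so that for every nonempty $S$ the complement $U - \bigcup_{i\in S}A_i$ admits a handle decomposition with either fewer handles of index $q$ or only handles of index $<q$; the polynomial property for $H$ and $T_kH$ on the resulting $(k+1)$-cube, together with the inductive hypothesis on the punctured cube, then yields the equivalence at $U$. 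The base case is a disjoint union of at most $k$ balls, i.e.\ an object of $\calO_k$. Finally the finitary hypothesis extends the conclusion to arbitrary open $U$ via an increasing union by handlebody interiors. Without exhibiting this decreasing complexity measure and the disk choice, your inductive step has no guarantee of terminating, so the proposal is an accurate outline of the proof's shape rather than a proof.
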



The proof of the next result combines \refT{PolynomialCharacterization} with the fact that embeddings and immersions agree on a single ball up to homotopy.  We will repeat this argument when we deduce that the linearization of the space of $r$-immersions is also the space of immersions.

\begin{prop}[\cite{W:EI1}, bottom of page 97]\label{P:T_1Emb=Imm}
The linearization of the space of embeddings is the space of immersions, namely there is an equivalence
$$
T_1\Emb(-,N)\simeq \Imm(-,N).
$$

\end{prop}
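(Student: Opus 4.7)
The plan is to apply \refT{PolynomialCharacterization} to a natural transformation between $T_1\Emb(-,N)$ and $\Imm(-,N)$, after arranging both sides to be polynomial of degree~$\leq 1$.

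First, I would construct the natural transformation. The inclusion $\Emb(-,N)\hookrightarrow \Imm(-,N)$ is a natural transformation of contravariant finitary isotopy functors, and applying $T_1$ yields $T_1\Emb(-,N)\to T_1\Imm(-,N)$. By \refEx{Polynomials}, $\Imm(-,N)$ is already polynomial of degree~$\leq 1$, so the canonical map $\Imm(-,N)\to T_1\Imm(-,N)$ is an equivalence (a polynomial functor is equivalent to its own Taylor polynomial, by the universal property of $T_k$). Composing produces a natural transformation
$$
\Phi\colon T_1\Emb(-,N)\longrightarrow \Imm(-,N).
$$
By \refT{T_kPolynomial} the domain of $\Phi$ is polynomial of degree~$\leq 1$, and by \refEx{Polynomials} so is the codomain. \refT{PolynomialCharacterization} then reduces the problem to showing that $\Phi$ is a weak equivalence on each $U\in\calO_1(M)$, i.e.\ on each open subset of $M$ diffeomorphic to a single open ball.

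For such a $U$, the indexing poset $\calO_1(U)$ has $U$ itself as a terminal object, so the homotopy limit in \refE{TaylorStage} collapses and gives $T_1\Emb(U,N)\simeq \Emb(U,N)$. Thus $\Phi$ evaluated at $U$ identifies, up to equivalence, with the inclusion
$$
\Emb(U,N)\longhookrightarrow \Imm(U,N),
$$
and the whole statement reduces to showing this inclusion is a weak equivalence when $U$ is diffeomorphic to $\R^m$.

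This last step is the main geometric input, and I expect it to be the only nontrivial point. The standard argument is to evaluate the $1$-jet at the center of $U$, producing a commutative triangle
$$
\xymatrix@R=10pt{
\Emb(U,N)\ar[rr]\ar[dr] & & \Imm(U,N)\ar[dl]\\
& \operatorname{Inj}(T_0U, TN) &
}
$$
where $\operatorname{Inj}(T_0U,TN)$ is the space of fiberwise injective linear maps (the bundle of injective $m$-frames in $TN$). The right-hand diagonal is a homotopy equivalence by Smale--Hirsch for a contractible domain, and the left-hand diagonal is an equivalence because an immersion of $\R^m$ is, by the inverse function theorem, an embedding on a small neighborhood of $0$; rescaling via $f\mapsto f(t\,\cdot\,)$ as $t\to 0^+$ gives a canonical deformation retraction of $\Emb(U,N)$ onto the same space of $1$-jets. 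By two-out-of-three the inclusion $\Emb(U,N)\hookrightarrow \Imm(U,N)$ is a weak equivalence, and \refT{PolynomialCharacterization} then upgrades this pointwise equivalence to the desired equivalence of functors.
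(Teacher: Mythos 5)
Your proposal is correct and matches the paper's route essentially exactly: the paper explicitly notes that the proof "combines \refT{PolynomialCharacterization} with the fact that embeddings and immersions agree on a single ball," and spells this out when proving the analogous \refP{T_1rImm=Imm}, where both $\Emb(B^m,N)$ (or $\rImm(B^m,N)$) and $\Imm(B^m,N)$ are compared to the Stiefel manifold via differentiation at a point, with contractible fibers. You have simply filled in the standard details the paper leaves implicit, such as the rescaling deformation retraction for $\Emb(U,N)$, the Smale--Hirsch input for $\Imm(U,N)$, and the two-out-of-three step.
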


There are two natural questions one can ask about the Taylor tower:
\begin{enumerate}
\item Does the Taylor tower for $F$ converge?
\item Does it converge to $F$?
\end{enumerate}

The convergence of the tower means that the connectivity of the maps $T_kF\to T_{k-1}F$ grows to infinity with $k$.  This can be established by looking at the spaces
\begin{equation}\label{E:Layer}
L_kF(-):=\hofiber (T_kF(-) \longrightarrow T_{k-1} F(-))
\end{equation}
and showing that their connectivity grows with $k$. Here we need to be working with a based Taylor tower, which can be accomplished by choosing a basepoint in $F(M)$ which in turn bases $T_k(U)$ for all $k$ and $U$.  The functor $L_kF(-)$ is called the \emph{$k^{th}$ layer} of the Taylor tower of $F$ and is \emph{homogeneous} in the sense that all its derivatives of degree $<k$ are trivial.  One of the main results in manifold calculus of functors is the statement that classifies all homogeneous functors in terms of certain spaces of sections \cite[Theorem 8.5]{W:EI1}.  We will not need this result here, but will use the following consequence (see \cite[Proposition 5.1.1]{M:MfldCalc} for more details).

\begin{prop}\label{P:LayerAndDerivative}
Let $M$ be of dimension $m$ and suppose $F\colon \calO(M)\to\Top$ is a contravariant finitary isotopy functor.  If the derivative $F^{(k)}(\emptyset)$ (see \refD{Derivative}) is $c_k$-connected, then $L_kF(M)$ is $(c_k-km)$-connected and, consequently, the map $T_k(M)\to T_{k-1}(M)$ is $(c_k-km+1)$-connected.
\end{prop}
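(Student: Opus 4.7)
The plan is to derive this from Weiss's classification of homogeneous functors of degree $k$ \cite[Theorem 8.5]{W:EI1}, which the authors have just cited. First I observe that, since $T_kF$ is polynomial of degree $\leq k$ by \refT{T_kPolynomial} while $T_{k-1}F$ is polynomial of degree $\leq k-1$, the layer $L_kF=\hofiber(T_kF\to T_{k-1}F)$ is homogeneous of degree $k$, and its $k$-th derivative at the empty set is equivalent to $F^{(k)}(\emptyset)$, hence $c_k$-connected by hypothesis.

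Weiss's classification theorem then produces a natural equivalence
\[
L_kF(M)\;\simeq\;\Gamma_c\bigl(Z_k \longrightarrow \textstyle\binom{M}{k}\bigr),
\]
where $\binom{M}{k}$ denotes the unordered configuration space of $k$ points in $M$ and $Z_k$ is a fibration whose fiber over a configuration $S$ is identified, by isotopy invariance together with the definition of the $k$-th derivative, with $F^{(k)}(\emptyset)$. Consequently every fiber of $Z_k$ is $c_k$-connected.

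At this point I apply the standard obstruction-theoretic estimate: for a fibration with $c$-connected fibers over a base of dimension $d$, the space of (compactly supported) sections is $(c-d)$-connected. Since the base $\binom{M}{k}$ is a manifold of dimension $km$, this yields the first half of the proposition, namely that $L_kF(M)$ is $(c_k-km)$-connected. The second half is immediate from \refD{k-connectedSpace}: because $L_kF(M)$ is by definition the homotopy fiber of $T_kF(M)\to T_{k-1}F(M)$, a $(c_k-km)$-connected fiber corresponds precisely to a $(c_k-km+1)$-connected map.

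The principal difficulty is the bookkeeping around the precise form of Weiss's classifying fibration: in \cite{W:EI1} the fiber appears decorated with various conventions (compact supports, sections vanishing on a prescribed subset, a possible delooping shift), and one must verify that these conventions do not perturb the connectivity count. Once the identification of the fiber with $F^{(k)}(\emptyset)$ is granted, the remainder is the well-known dimension-versus-connectivity trade-off for section spaces combined with a direct bookkeeping of connectivity numbers.
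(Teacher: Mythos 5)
Your proposal is correct and it is exactly the argument the paper has in mind: the paper states this proposition without proof, citing Weiss's classification of homogeneous functors \cite[Theorem 8.5]{W:EI1} and calling the proposition a ``consequence'' of it, and your proof is precisely the standard derivation of that consequence. The chain of ideas---$L_kF$ is homogeneous of degree $k$ with $k$-th derivative equivalent to $F^{(k)}(\emptyset)$, the classification identifies $L_kF(M)$ with a section space over the unordered configuration space $\binom{M}{k}$, the fibers are $c_k$-connected by isotopy invariance, and the base has dimension $km$, so the obstruction-theoretic estimate gives the connectivity $c_k-km$---is exactly the route intended, and the passage to the statement about $T_kF(M)\to T_{k-1}F(M)$ via \refD{k-connectedSpace} is correct. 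Your closing caveat about the conventions in Weiss's classifying fibration (compact supports, behavior near the fat diagonal, possible deloopings) is the right thing to flag, but as you say none of these shift the dimension-versus-connectivity count, so the estimate survives intact.
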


The above is true when $M$ is compact and is replaced by any interior of a compact codimension zero handlebody,
in which case the handle dimension (the highest dimension of a handle
necessary to build the handlebody) replaces $m$.
 However, we only focus on the case of $M$ itself, and this will remain true for the rest of the paper and for our results.  This is fine since, in most applications of manifold calculus, this is the only case of importance.

For the embedding functor in particular, we have the following result.  

\begin{thm}\label{T:EmbeddingsLayer}
The derivative $\Emb(M,N)^{(k)}(\emptyset)$ is $(k-1)(n-2)$-connected, and hence the connectivity of the map $T_k\Emb(M,N)\to T_{k-1}\Emb(M,N)$ is 
$$
(k-1)(n-2)-km+1=(k-1)(n-m-2)-m+1.
$$
\end{thm}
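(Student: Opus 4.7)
The plan is to handle the two claims of the theorem in turn. Assuming the first, the second follows immediately by applying \refP{LayerAndDerivative} with $c_k = (k-1)(n-2)$: the connectivity of the map $T_k\Emb(M,N) \to T_{k-1}\Emb(M,N)$ is $c_k - km + 1 = (k-1)(n-2) - km + 1 = (k-1)(n-m-2) - m + 1$, as stated. The substantive work is the connectivity of the derivative, which by \refD{Derivative} is the total fiber of the cube $\calE\colon S \mapsto \Emb(\bigsqcup_{i \notin S} B_i, N)$ for disjoint open balls $B_1, \ldots, B_k \subset M$. The key input is the multiple disjunction theorem for embedding spaces (Goodwillie--Klein \cite{GK}), which says that cubes of the form $S \mapsto \Emb(M - A_S, N)$ are $c$-cartesian for an explicit $c$ depending on the handle indices of the $A_i$ and on $n - m$; applied to our setting of disjoint small balls, this gives that $\calE$ is $((k-1)(n-2)+1)$-cartesian, equivalently that $\tfiber(\calE)$ is $(k-1)(n-2)$-connected.

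To make this reduction explicit, I would first identify $\Emb(\bigsqcup_{i \notin S} B_i, N)$ with a configuration-like object: evaluation at the ball centers together with a derivative framing realizes it as the total space of a fiber bundle over $\Conf(k-|S|, N)$ with fiber $V_m(\R^n)^{k-|S|}$, the product of Stiefel manifolds of injective linear maps $\R^m \to \R^n$. The bundle structure is natural with respect to the restriction maps of $\calE$, so we obtain a fibration of cubes $\calV \to \calE \to \calC$, where $\calV\colon S \mapsto V_m(\R^n)^{k-|S|}$ is a cartesian product cube in the sense of \refEx{ProductCubes1} and $\calC\colon S \mapsto \Conf(k-|S|, N)$. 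Since $\tfiber(\calV) \simeq \ast$, the long exact sequence of total fibers associated to this fibration of cubes yields $\tfiber(\calE) \simeq \tfiber(\calC)$, reducing the problem to the configuration cube.

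The total fiber $\tfiber(\calC)$ is a local invariant near the thin diagonal $\Delta \subset N^k$, so by a tubular-neighborhood argument I would replace $N$ by a tangent space $\R^n$ at a chosen point of $\Delta$, reducing to the Euclidean case $\calC_k\colon S \mapsto \Conf(k-|S|, \R^n)$. Then I would proceed by induction on $k$ using \refP{IteratedFiber}: write $\calC_k$ as a map of $(k-1)$-cubes by splitting on whether the $k$-th ball is kept or forgotten; the ``forget'' sub-cube is exactly the $(k-1)$-configuration cube whose $\tfiber$ is inductively $(k-2)(n-2)$-connected, while the vertex-wise maps in the splitting are Fadell--Neuwirth fibrations~\cite{FN:ConfFibration} whose fibers $\R^n \setminus \{\text{finitely many points}\}$ are wedges of $(n-1)$-spheres and hence $(n-2)$-connected. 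Assembling these connectivities one obtains that $\tfiber(\calC_k)$ is $(k-1)(n-2)$-connected.

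The main obstacle is the additive connectivity step: a naive application of \refP{CompositionConnectivity} to iterated fibers does not immediately give the bound, since total-fiber connectivity can behave unpredictably across a cube of maps. The cleanest route is via the Blakers--Massey-type estimate underlying the multiple disjunction theorem, which is tractable in the Euclidean setting precisely because the Fadell--Neuwirth fibers admit explicit wedge-of-spheres models and hence controllable connectivity. Some care with the choice of basepoint near the diagonal is also needed to ensure that the derivative is computed at the ``collapsed'' configuration, which is where the nontrivial connectivity bound lives.
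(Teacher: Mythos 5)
Your proposal takes essentially the same route as the paper's intended proof (outlined in the text following the theorem statement and carried out in detail in \cite[Theorem 10.3.3]{MV:Cubes}): reduce the derivative cube of embeddings of disjoint balls to a cube of configuration spaces, exploit that the projections between configuration spaces are Fadell--Neuwirth fibrations whose fibers are wedges of $(n-1)$-spheres and hence $(n-2)$-connected, and conclude via a Blakers--Massey-type estimate that the configuration cube is $((k-1)(n-2)+1)$-cartesian, so that its total fiber is $(k-1)(n-2)$-connected. Your Stiefel-bundle reduction $\calV \to \calE \to \calC$ with $\tfiber(\calV)\simeq\ast$ is a correct and slightly more explicit version of what the paper compresses into ``balls are homotopy equivalent to points.''

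The one misconception to flag is your opening claim that the Goodwillie--Klein multiple disjunction theorem \cite{GK} is the ``key input.'' It is not needed here: that deep theorem is what powers the convergence result \refT{EmbeddingsConvergence} (and extensions of \refT{EmbeddingsLayer} from closed $M$ to submanifolds of higher handle index), whereas the derivative at $\emptyset$ lives entirely over a disjoint union of balls, and the elementary configuration-space argument you give in your second and third paragraphs is already self-contained --- it is precisely what the paper points to via \cite[Example 6.2.9]{MV:Cubes}. Citing \cite{GK} is not wrong, but it is a sledgehammer and is not what the referenced proof does. Two smaller points: your worry about the ``choice of basepoint near the diagonal'' is not a genuine subtlety, since by \refD{k-connectedSpace} the connectivity of $a(\calC)\colon \Conf(k,N)\to N^k$ is a statement over every basepoint of the punctured homotopy limit $N^k$ simultaneously, and the diagonal basepoints are automatically the minimizing ones; and you are correct that the inductive assembly cannot be done by a naive iterated application of \refP{CompositionConnectivity} --- it should be phrased as an application of the cubical Blakers--Massey theorem to the configuration cube, which is exactly the formulation the paper adopts.
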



For the proof of \refT{EmbeddingsLayer}, see, for example, \cite[Theorem 10.3.3]{MV:Cubes}.   In brief, one first passes from embeddings of balls, which are in the definition of the derivative, to embeddings of points, namely configuration spaces, and this is fine since balls are homotopy equivalent to points.  The proof then comes down to showing that the cube of configuration spaces and projections between them is $((k-1)(n-2)+1)$-cartesian, and hence the total fiber, i.e.~the derivative, has connectivity one less (one reference for this is \cite[Example 6.2.9]{MV:Cubes}).    The key is that projection maps between configuration spaces are fibrations, so that their (homotopy) fiber is easy to handle.  More will be said about this proof, and the ways in which the case of $r$-immersions is more complicated, in the discussion at the end of the paper.

The second question about the convergence of the Taylor tower, namely whether the tower converges to $F$, is a more difficult one.  By convergence to $F$ we mean that the map $F\to T_\infty F=\holim_k T_k$ is an equivalence, i.e.~infinitely connected.  One way to establish this would be to argue that the connectivity of the maps $F\to T_kF$ grows with $k$.  This is precisely what happens with the embedding functor.  Namely, we have the following result from \cite{GW:EI2}, which builds heavily on the work in \cite{GK}.

\begin{thm}\label{T:EmbeddingsConvergence}
Suppose $M$ is a smooth closed manifold of dimension $m$, $N$ is a smooth manifold of dimension $n$, and $n-m\geq 2$. Then the map
$$
\Emb(M,N)\longrightarrow T_k\Emb(M,N)
$$
is $(k(n-m-2)-m+1)$-connected.  If $n-m>2$, then the connectivities grow with $k$ and the Taylor tower therefore converges to $\Emb(M,N)$.
\end{thm}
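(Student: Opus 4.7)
The plan is to reduce this convergence statement to the \emph{multiple disjunction theorem} of Goodwillie-Klein \cite{GK}, which is the deep surgery-theoretic input. Once that is in hand, the rest of the argument is a relatively formal manipulation of cubes and handle decompositions.

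First I would convert the assertion into a statement about cubical diagrams of embedding spaces. The polynomial condition of \refD{Polynomial}, together with the cubical reformulation of $T_kF$, tells us that the connectivity of $\Emb(M,N) \to T_k\Emb(M,N)$ is controlled by how homotopy cartesian the $(k+1)$-cubes
\[
S \longmapsto \Emb\Bigl(M \setminus \bigcup_{i \in S} A_i, \; N\Bigr)
\]
are, where $A_0, \ldots, A_k$ range over disjoint compact submanifolds of $M$. A uniform cartesian estimate for these cubes, taken over all families of $A_i$ compatible with a fixed handle decomposition of $M$, translates directly into a connectivity estimate on $\Emb \to T_k\Emb$.

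Next I would invoke the Goodwillie-Klein disjunction statement: for $A_0, \ldots, A_k$ of handle indices $a_0, \ldots, a_k$, the cube above is at least
\[
\Bigl(1 - k + \sum_{i=0}^{k} (n - a_i - 2)\Bigr)\text{-cartesian}.
\]
In the worst case $a_i = m$ for all $i$, this yields a cartesian degree on the order of $(k+1)(n-m-2) - k + 1$. This is the single step where genuinely nonformal work enters the proof.

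The third step is to stitch these cartesian cubes across a handle decomposition of $M$ and track how the connectivity of $\Emb(-,N) \to T_k\Emb(-,N)$ evolves under successive handle attachments. The base case is easy: if $U$ is a single ball then $\Emb(U,N) \simeq \Imm(U,N)$ and $T_k\Imm \simeq \Imm$ since $\Imm$ is already polynomial of degree $\leq 1$ by \refEx{Polynomials}. Each handle attachment contributes to the total fiber of one of the cubes from step one, and compounding the resulting connectivity bounds (while bookkeeping the $-m$ handle-dimension shift as in \refP{LayerAndDerivative}) yields the stated bound $k(n-m-2) - m + 1$. The hypothesis $n - m > 2$ then ensures this bound is positive and grows linearly in $k$, giving convergence of the Taylor tower to $\Emb(M,N)$.

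The main obstacle, by a very large margin, is the multiple disjunction theorem itself. Its proof builds on Morlet's single disjunction theorem via an intricate induction on the number of submanifolds and their handle indices, using surgery to construct simultaneous disjunction isotopies while carefully tracking the connectivity that can be attained at each stage. The cubical reduction in step one and the handle induction in step three are essentially formal once multiple disjunction is available as a black box; disjunction itself, however, requires substantive geometric input well outside pure functor calculus.
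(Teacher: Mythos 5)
The paper does not prove this theorem: it is quoted from Goodwillie--Weiss \cite{GW:EI2}, with the multiple disjunction results of Goodwillie--Klein \cite{GK} identified as the essential geometric input and \cite{MV:Cubes} cited for an overview of the argument. Your sketch correctly reproduces the structure of that proof---cubical reformulation, multiple disjunction as the single nonformal step, and handle induction to assemble the estimate---so it matches the approach the paper invokes; the one thing worth double-checking is the precise cartesian degree you attribute to the Goodwillie--Klein theorem, whose constant term in the standard formulation (for the cube $S \mapsto \Emb(A_0, N \setminus \bigcup_{i\in S} A_i)$) depends on the handle index of a distinguished submanifold, as in $1 - a_0 + \sum_{i\geq 1}(n - a_i - 2)$, rather than on the number $k$ of submanifolds removed.
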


In line with the comments following \refP{LayerAndDerivative}, the above is true when $M$ is replaced by the interior of a codimension zero handlebody, in which case $m$ has to be replaced by its handle dimension.  The proof is difficult and requires various disjunction results for embeddings; for an overview, see \cite[Section 10.3.2]{MV:Cubes}.

\begin{rem}\label{R:ExpectSameNumber}
The number in \refT{EmbeddingsConvergence} is the same as the number in \refT{EmbeddingsLayer} (with a shift in $k$).  This is not surprising since the first number can be used to conjecture the second:  Suppose we know that the Taylor tower converges, i.e.~the connectivities of the maps between the stages increase, and that it converges to $F$.  Suppose $T_{k+1}F(M)\to T_k F(M)$ is $c$-connected.  Then we have a diagram
$$
\xymatrix{
F(M)\ar[r]^-\sim \ar[dr] & T_\infty F(M) \ar[d]^-{\text{$c$-connected}}\\
& T_k F(M)
}
$$
The vertical map in the above diagram is also $c$-connected, due to \refP{CompositionConnectivity}, because it is the composition of maps for which the least connectivity is $c$.  Again by \refP{CompositionConnectivity}, it then follows that the connectivity of $F(M)\to T_k F(M)$ must also be $c$.  For $F(M)=\Emb(M,N)$, $c$ is precisely $k(n-m-2)-m+1$.
\end{rem}


\section{First $r$ stages of the Taylor tower for $r$-immersions}\label{S:r-ImmersionsTaylorTower}


In this section we give the description of the connectivities between the first $r$ stages of the Taylor tower for $r$-immersions.  Much of what we do can be adapted from the case of $M$ to the case of a codimension zero handlebody in $M$ (see comments following \refP{LayerAndDerivative} and \refT{EmbeddingsConvergence}), but we content ourselves with the case of $M$ because that is the most important and useful one.  Additionally, the generalization would require us to venture outside the intended scope and level of difficulty of this paper.  The general case will be tackled in future work.

 We start with $T_1\rImm(M,N)$.  The following is analogous to \refP{T_1Emb=Imm}.

\begin{prop}\label{P:T_1rImm=Imm}
The linearization of the space of $r$-immersions is the space of immersions, namely there is an equivalence
$$
T_1\rImm(M,N)\simeq \Imm(M,N).
$$
\end{prop}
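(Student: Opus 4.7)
The plan is to mimic the proof strategy of \refP{T_1Emb=Imm}, using the polynomial characterization from \refT{PolynomialCharacterization}. By \refEx{Polynomials} the functor $\Imm(-,N)$ is already polynomial of degree $\leq 1$, hence coincides with its own linearization: $T_1\Imm(-,N)\simeq \Imm(-,N)$. The tautological inclusion $\iota\colon \rImm(-,N)\hookrightarrow \Imm(-,N)$ is a natural transformation of contravariant finitary isotopy functors on $\calO(M)$, so applying $T_1$ yields
$$
T_1\iota\colon T_1\rImm(-,N)\longrightarrow T_1\Imm(-,N)\simeq \Imm(-,N).
$$
Both source and target are polynomial of degree $\leq 1$ (the source by \refT{T_kPolynomial}), so by \refT{PolynomialCharacterization} it suffices to show that $T_1\iota$ is a weak equivalence on every $U\in \calO_1(M)$.

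For $U\in \calO_1(M)$, the object $U$ is terminal in $\calO_1(U)$, and therefore the homotopy limit in \eqref{E:TaylorStage} collapses: the canonical map $F(U)\to T_1F(U)$ is a weak equivalence for any isotopy functor $F$. The problem thus reduces to showing that the inclusion $\rImm(U,N)\hookrightarrow \Imm(U,N)$ is a weak equivalence in the two cases $U=\emptyset$ (trivial, as both spaces consist of a single point) and $U\cong \R^m$ a single open ball $B$.

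The substantive case is the open ball $B\cong \R^m$, which I would handle with a shrinking/linearization argument of the sort that underlies $\Emb(B,N)\simeq \Imm(B,N)$ in the embedding case. Fixing a chart of $N$ near a basepoint and an immersion $f\colon B\to N$ sending $0$ to that basepoint, consider the family
$$
g_t(x) := \tfrac{1}{t}\bigl(f(tx)-f(0)\bigr),\qquad t\in (0,1],
$$
extended continuously by $g_0 := Df(0)$. This gives a deformation retract of $\Imm(B,N)$ onto the space of linear injections $\R^m\hookrightarrow T_{f(0)}N$, namely the total space of the Stiefel bundle $V_m(TN)\to N$. Crucially, $g_t(x)=g_t(y)$ if and only if $f(tx)=f(ty)$, so multiplicities of self-intersection are preserved throughout the homotopy; the same formula therefore also gives a deformation retract of $\rImm(B,N)$ onto $V_m(TN)\subset \Emb(B,N)$. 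The inclusion $\rImm(B,N)\hookrightarrow \Imm(B,N)$ is thus a weak equivalence, which completes the argument. The main technical point to be careful with is giving a globally continuous, chart-independent formulation of this linearization on $N$, which is standardly handled via a fixed exponential map, exactly as in Weiss's original proof for the embedding functor.
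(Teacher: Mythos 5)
Your proof is correct and takes essentially the same approach as the paper: both reduce via \refT{PolynomialCharacterization} to showing $\rImm(B^m,N)\simeq\Imm(B^m,N)$ for a single ball, and then both identify each space with the Stiefel manifold $V_m(TN)$ by linearizing at the center of the ball. Your explicit shrinking homotopy $g_t$ is simply a more detailed account of the paper's assertion that differentiation at the center has contractible fibers for both functors.
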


\begin{proof}
We already know from \refEx{Polynomials} that $\Imm(M,N)$ is linear and from \refT{T_kPolynomial} that $T_1\rImm(M,N)$ is linear.  Therefore by \refT{PolynomialCharacterization}, to prove the desired equivalence it suffices to check that that the equivalence holds for a single ball.  Namely, with $B^m$ the $m$-dimensional ball, we want to show that
$$
\rImm(B^m,N)\simeq \Imm(B^m,N).
$$
But the argument for this is the same as showing that spaces of embeddings and immersions agree on a single ball.  Namely, one can differentiate an immersion or an $r$-immersion at, say, the center of the ball, to get a point in the Stiefel manifold of $m$-frames in the tangent space of $N$.  The fibers of both maps are contractible which means that $\rImm(B^m,N)$ and $\Imm(B^m,N)$ are equivalent.
\end{proof}

We next look at the layers of the Taylor tower for $\rImm(M,N)$, which, by \refP{LayerAndDerivative}, will require us to understand the derivatives $\rImm(M,N)^{(k)}(\emptyset)$.   Looking back at \refD{Derivative}, the derivatives are total fibers of cubical diagrams consisting of spaces of $r$-immersions of unions of balls.  But, as alluded to in the discussion following \refT{EmbeddingsLayer}, it is possible to replace these with diagrams of $r$-immersions of points, namely with diagrams of $\rConf(k,N)$.  The argument for this is identical to that for the case of embeddings.  For details, see discussion following the statement of \cite[Theorem 7.2]{M:MfldCalc}.

Suppose the Taylor tower for $\rImm(M,N)$ has been based (see comment after \eqref{E:Layer}).  Recalling from \refD{k-connectedSpace} that a weakly contractible space is one that is infinitely connected, we have the following result.

\begin{prop}\label{P:L_krImmContractible}
For $2\leq k\leq r-1$, the layer $L_k\rImm(M,N)$ of the Taylor tower for $\rImm(M,N)$ is weakly contractible.
\end{prop}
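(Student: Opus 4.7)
The plan is to compute the $k$-th derivative $\rImm(M,N)^{(k)}(\emptyset)$, show that it is weakly contractible, and then apply \refP{LayerAndDerivative} to conclude that $L_k\rImm(M,N)$ is weakly contractible.

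The first step is to invoke the reduction indicated in the paragraph immediately preceding the proposition (with details available following \cite[Theorem~7.2]{M:MfldCalc}): for $k\geq 2$, the derivative $\rImm(M,N)^{(k)}(\emptyset)$, originally the total fiber of the cube $S\mapsto\rImm\bigl(\bigsqcup_{i\notin S}B_i,N\bigr)$, is equivalent to the total fiber of the cube
\begin{align*}
\calX\colon \calP(\underline k) &\longrightarrow \Top, \\
S &\longmapsto \rConf(k-|S|,\,N),
\end{align*}
with maps the standard forgetful projections. Heuristically this replacement is allowed because $\Imm(-,N)$ is linear and $\Imm\bigl(\bigsqcup B_i,N\bigr)\simeq\prod_i\Imm(B_i,N)$ is a product, so by \refEx{ProductCubes1} the ``tangential data'' distinguishing $r$-immersions of balls from $r$-configurations of their centers assembles into a cube whose total fiber is contractible for $k\geq 2$.

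The second step exploits the hypothesis $k\leq r-1$. For every $S\in\calP(\underline k)$ we have $k-|S|\leq k\leq r-1<r$, so by \refE{k<rN} one has $\rConf(k-|S|,N)\cong N^{k-|S|}$ (with $N^0=\ast$), and the forgetful projections become the obvious product projections. Thus $\calX$ is precisely the cube of \refEx{ProductCubes1} with every factor equal to $N$, and hence is homotopy cartesian. Its total fiber is therefore weakly contractible, so $\rImm(M,N)^{(k)}(\emptyset)$ is $\infty$-connected. Applying \refP{LayerAndDerivative} with $c_k=\infty$ then yields that $L_k\rImm(M,N)$ is weakly contractible, as claimed.

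The main obstacle is the first step: the passage from an $\rImm$-of-balls cube to an $\rConf$-of-points cube at the level of derivatives. Fortunately this is entirely parallel to the embedding case and can be imported from \cite{M:MfldCalc}. Once granted, the remainder of the argument is a direct combination of \refE{k<rN} and \refEx{ProductCubes1}; no further general-position or transversality input is required, which is in keeping with the remark in the introduction that the first $r$ layers are accessible precisely because $r$-configuration spaces trivialize in this range.
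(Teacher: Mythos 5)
Your proposal is correct and follows essentially the same route as the paper's own proof: reduce the derivative to the total fiber of the cube $S\mapsto\rConf(k-|S|,N)$, use $k\leq r-1$ to identify each entry as $N^{k-|S|}$, recognize the resulting cube as the one in \refEx{ProductCubes1} (hence homotopy cartesian with weakly contractible total fiber), and finish with \refP{LayerAndDerivative}. The only addition is your heuristic gloss on the balls-to-points reduction, which the paper simply cites; otherwise the arguments coincide.
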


\begin{proof}
It suffices to show that the derivative $\rImm^{(k)}(\emptyset,N)$ is weakly contractible.  Then, by \refP{LayerAndDerivative}, it follows that $L_k\rImm(M,N)$ is weakly contractible as well.

By the discussion above, $\rImm^{(k)}(\emptyset,N)$ is the total fiber of the cubical diagram 
\begin{align*}
\calX \colon \calP(\underline k) & \longrightarrow \Top\\
S   & \longmapsto \rConf(k-|S|, N)
\end{align*}
where the maps are projections given by forgetting points in the configuration, namely an inclusion $T\to S$ gives a map that projects away from those configuration points indexed by $S-T$.  However, since $k<r$, as observed in \refEx{rImmersionsExamples}, 
$$
\rConf(k-|S|, N)=N^{k-|S|}
$$
The cube in question is thus equivalent to the cube 
\begin{align}\label{E:ProductsCube}
\calX \colon \calP(\underline k) & \longrightarrow \Top\\
S   & \longmapsto \begin{cases} N^{k-|S|}, & S\neq \underline k; \\  
\ast, &  S=\underline k
\end{cases}
\end{align}
with projection maps as before projecting away from those factors indexed by $S-T$.  But this cube is homotopy cartesian by \refEx{ProductCubes1}.  This means precisely that its total fiber is weakly contractible.  Equivalent cubes have equivalent total fibers, and so the total fiber of the original cube, namely $\rImm^{(k)}(\emptyset,N)$, is also weakly contractible.
%
%
\end{proof}



\begin{thm}\label{T:T_kSameFork<r} For $2\leq k\leq r-1$, we have a weak equivalence
$$
T_k\rImm(M,N)\stackrel{\sim}{\longrightarrow}T_{k-1}\rImm(M,N).
$$
\end{thm}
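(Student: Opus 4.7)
The plan is to deduce \refT{T_kSameFork<r} immediately from \refP{L_krImmContractible}. By definition, $L_k\rImm(M,N)$ is the homotopy fiber of the natural transformation $T_k\rImm(M,N)\to T_{k-1}\rImm(M,N)$, and the preceding proposition shows that this fiber is weakly contractible for $2\leq k\leq r-1$.

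I would then invoke the standard principle that a map whose homotopy fiber over each basepoint in the target is weakly contractible is itself a weak equivalence. Since the comments after \refD{TaylorStage} note that the maps between the Taylor stages can be modeled as fibrations, the long exact sequence of homotopy groups yields that
$$
T_k\rImm(M,N)\stackrel{\sim}{\longrightarrow}T_{k-1}\rImm(M,N)
$$
induces an isomorphism on $\pi_i$ for all $i\geq 0$ and at every basepoint, which is precisely the statement of the theorem.

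I do not expect any real obstacle in this argument, since essentially all the work has already been carried out in \refP{L_krImmContractible}, whose proof reduced $\rImm^{(k)}(\emptyset,N)$ to the total fiber of the product cube in \refEx{ProductCubes1}. The only small subtlety is the need to confirm that the hofiber is contractible over every component of $T_{k-1}\rImm(M,N)$, rather than only at a preferred basepoint. This follows from the fact that in \refP{LayerAndDerivative} the connectivity of $L_kF$ is governed uniformly by the derivative $F^{(k)}(\emptyset)$; hence weak contractibility of $\rImm^{(k)}(\emptyset,N)$ forces $L_k\rImm(M,N)$ to be weakly contractible as a whole, not merely at a single basepoint.
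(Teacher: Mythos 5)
Your proposal is correct and matches the paper's argument: both deduce the result directly from \refP{L_krImmContractible} together with the fact that the stage maps are fibrations and the long exact sequence in homotopy. The extra remark about contractibility over every component is a reasonable point of care, but it does not change the route.
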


\begin{proof}
By \refP{L_krImmContractible}, the fiber $L_k\rImm(M,N)$ of the fibration $T_k\rImm(M,N)\to T_{k-1}\rImm(M,N)$ is weakly contractible in the given range. By the homotopy long exact sequence of a fibration, it follows that the map $T_k\rImm(M,N)\to T_{k-1}\rImm(M,N)$ is infinitely connected, or a weak equivalence.
\end{proof}


\begin{cor}\label{C:Connectivityk<r}
For $1\leq k\leq r-1$, the map
$$
\rImm(M,N) \longrightarrow T_k\rImm(M,N)
$$
is $((r-1)n-rm-1)$-connected.
\end{cor}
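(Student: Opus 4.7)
The plan is to assemble this corollary directly from the three pieces already in place: the identification of $T_1$ with immersions, the collapse of the tower between stages $1$ and $r-1$, and the general-position connectivity estimate for the inclusion $\rImm(M,N)\hookrightarrow \Imm(M,N)$.

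First I would dispose of the case $k=1$. By \refP{T_1rImm=Imm} we have an equivalence $T_1\rImm(M,N)\simeq \Imm(M,N)$, and under this equivalence the canonical map $\rImm(M,N)\to T_1\rImm(M,N)$ is identified with the inclusion $\rImm(M,N)\to \Imm(M,N)$. That map is $((r-1)n-rm-1)$-connected by \refC{rImm->ImmConnectivity}, settling $k=1$.

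For $2\leq k\leq r-1$ I would factor the structure map as
$$
\rImm(M,N)\longrightarrow T_k\rImm(M,N)\longrightarrow T_1\rImm(M,N)\simeq \Imm(M,N),
$$
where the second arrow is the composite $T_k\to T_{k-1}\to \cdots\to T_1$ of maps in the Taylor tower. By \refT{T_kSameFork<r}, each of these intermediate transition maps is a weak equivalence, hence so is their composite; in particular this second arrow is $c$-connected for every $c$. The overall composite is again the inclusion $\rImm(M,N)\to \Imm(M,N)$, which by \refC{rImm->ImmConnectivity} is $((r-1)n-rm-1)$-connected.

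Finally I would invoke the third clause of \refP{CompositionConnectivity}: given $f\colon \rImm(M,N)\to T_k\rImm(M,N)$ and a weak equivalence $g\colon T_k\rImm(M,N)\to \Imm(M,N)$, with $g\circ f$ being $((r-1)n-rm-1)$-connected and $g$ being infinitely connected (in particular $((r-1)n-rm)$-connected), it follows that $f$ itself is $((r-1)n-rm-1)$-connected, which is exactly the claimed bound. There is no substantive obstacle here — the only subtlety is bookkeeping the direction of the inequality in \refP{CompositionConnectivity}, i.e.\ checking that the connectivity of $g$ exceeds that of $g\circ f$ by at least one, which is automatic since $g$ is a weak equivalence.
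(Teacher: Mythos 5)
Your proof is correct and follows essentially the same route as the paper's: identify $T_1\rImm(M,N)\simeq\Imm(M,N)$, use \refC{rImm->ImmConnectivity} to get the connectivity of $\rImm(M,N)\to T_1\rImm(M,N)$, note by \refT{T_kSameFork<r} that the maps $T_k\to T_{k-1}$ are equivalences for $2\leq k\leq r-1$, and then propagate the connectivity via \refP{CompositionConnectivity}. The only cosmetic difference is that the paper phrases the last step as an induction up the tower while you compose all the weak equivalences at once, which is the same argument.
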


\begin{proof}
We know from \refC{rImm->ImmConnectivity} and \refP{T_1rImm=Imm} that the map 
$$
\rImm(M,N) \longrightarrow T_1\rImm(M,N)\simeq \Imm(M,N)
$$
is $((r-1)n-rm-1)$-connected.  Inducting up the tower and using \refP{CompositionConnectivity} along with \refT{T_kSameFork<r} gives the desired result.
\end{proof}

Proposition~\ref{P:L_rrImmConnectivity} will give the connectivity of the next layer, $L_r\rImm(M,N)$.  Before we prove it, we need the following result.

\begin{lemma}\label{L:N-diagonal->N}
For $N$ a manifold of dimension $n$, the inclusion 
$$
N^r\setminus \Delta\longrightarrow N^r,
$$
where $\Delta$ is the thin diagonal, is $((r-1)n-1)$-connected.
\end{lemma}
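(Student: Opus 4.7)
The plan is to repeat the general position argument used in the proof of \refT{rImm->(r+1)ImmConnectivity}. The thin diagonal $\Delta$ is the image of the smooth embedding $N\hookrightarrow N^r$, $x\mapsto(x,\ldots,x)$, so it is a closed submanifold of $N^r$ of dimension $n$, hence of codimension $(r-1)n$.

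Following \refD{k-connectedSpace}, I would show that the inclusion induces a surjection on $\pi_{(r-1)n-1}$ and an isomorphism on $\pi_k$ for $k<(r-1)n-1$. For the surjection, take any continuous map $h\colon S^k\to N^r$ with $k\leq (r-1)n-1$, replace it by a smooth representative of the same homotopy class, and perturb that representative to be transverse to $\Delta$. By transversality, the preimage $h^{-1}(\Delta)$ is a submanifold of $S^k$ of codimension $(r-1)n$, and therefore of dimension $k-(r-1)n<0$. Hence $h^{-1}(\Delta)=\emptyset$, so the perturbed map actually lands in $N^r\setminus\Delta$.

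For the isomorphism, apply the same procedure to a homotopy $H\colon S^k\times I\to N^r$ between two maps whose images already avoid $\Delta$. Using the rel-boundary version of transversality (which is applicable because $H$ already misses $\Delta$ on $S^k\times\partial I$), one obtains $H^{-1}(\Delta)$ of codimension $(r-1)n$ inside $S^k\times I$, and thus of dimension $k+1-(r-1)n$. When $k\leq (r-1)n-2$ this is negative, so $H^{-1}(\Delta)=\emptyset$ and the homotopy is realized inside $N^r\setminus\Delta$. Combining the two steps gives exactly the required $((r-1)n-1)$-connectivity.

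No serious obstacle is expected, as this is a direct transversality and dimension count of the same flavor as \refT{rImm->(r+1)ImmConnectivity}; the only technical points to invoke are smooth approximation (to replace arbitrary continuous maps by smooth ones without changing their homotopy class) and the rel-boundary version of transversality used in the homotopy step.
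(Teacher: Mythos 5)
Your proof is correct and uses essentially the same general-position argument as the paper. The paper's version first replaces $N^r$ with the homeomorphic mapping space $\Map(\{x_1,\ldots,x_r\},N)$ and works with the adjoint map, mirroring the proof of \refT{rImm->(r+1)ImmConnectivity} in the special case where the source manifold is a set of $r$ points; your version works directly with maps $S^k \to N^r$ and their transverse perturbations to $\Delta$, which is slightly more streamlined but relies on the same transversality and dimension count.
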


\begin{proof}
This proof is a simpler version of the one from \refT{rImm->(r+1)ImmConnectivity}.  Namely, a map $S^k\to N^r$ by transversality generically misses the thin diagonal if $k<(r-1)n$, and a map $S^k\times I\to N^r$ misses it if $k<(r-1)n-1$.  This means that, under those dimensional assumptions, the maps are homotopic to maps that lift to $N^r\setminus \Delta$, and this means that the map $N^r\setminus\Delta\to N^r$ induces isomorphisms on homotopy groups $\pi_k$ for $k<(r-1)n-1$ and a surjection on $\pi_{(r-1)n-1}$, which is what we wanted to show.
%
\end{proof}

\begin{prop}\label{P:L_rrImmConnectivity}
The layer $L_r\rImm(M,N)$ of the Taylor tower for $\rImm(M,N)$ is $((r-1)n-rm-2)$-connected.
\end{prop}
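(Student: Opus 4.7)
The strategy is the natural analogue of the proof of \refP{L_krImmContractible}: reduce to the derivative, model it with a cube of $r$-configuration spaces, and use the fact that when $k=r$ only one vertex of the cube fails to be a plain product of copies of $N$. By \refP{LayerAndDerivative}, it suffices to prove that the derivative $\rImm^{(r)}(\emptyset, N)$ is $((r-1)n - 2)$-connected, since the proposition will then yield that $L_r\rImm(M,N)$ is $((r-1)n - 2 - rm)$-connected, as claimed.

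Just as in \refP{L_krImmContractible}, the derivative is identified with the total fiber of the $r$-cube
\begin{align*}
\calX \colon \calP(\underline r) & \longrightarrow \Top \\
S   & \longmapsto \rConf(r - |S|, N),
\end{align*}
with structure maps given by forgetting the configuration points indexed by $S \setminus T$ for $T \subseteq S$. The key observation is that for every nonempty $S \subseteq \underline r$ we have $r - |S| < r$, so by \refEx{rImmersionsExamples},
$$
\rConf(r - |S|, N) \;\cong\; N^{r - |S|}.
$$
Only the initial vertex $\calX(\emptyset) = \rConf(r, N) \cong N^r \setminus \Delta$ differs from the corresponding vertex of the cube $\calY$ of \refEx{ProductCubes1} taken with $X_i = N$.

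Since $\calX$ and $\calY$ agree on the entire punctured cube $\calP_0(\underline r)$, and since $\calY$ is homotopy cartesian (so $\calY(\emptyset) = N^r \simeq \holim_{S \in \calP_0(\underline r)} \calY(S)$), the unpunctured comparison gives
$$
\tfiber(\calX) \;\simeq\; \hofiber\!\bigl(N^r \setminus \Delta \longrightarrow N^r\bigr).
$$
By \refL{N-diagonal->N}, this inclusion is $((r-1)n - 1)$-connected, so its homotopy fiber is $((r-1)n - 2)$-connected. Plugging this into \refP{LayerAndDerivative} completes the argument.

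The step requiring the most care is justifying the identification $\tfiber(\calX) \simeq \hofiber(N^r \setminus \Delta \to N^r)$ from the two cubes agreeing on $\calP_0(\underline r)$; this is a straightforward unwinding of the definition of total fiber together with the fact that $\calY$ is homotopy cartesian. There is no deep obstacle in the range $k = r$: this is precisely the first stage where the thin diagonal enters, but it enters at a single vertex, so \refL{N-diagonal->N} is enough. The genuine difficulty lies beyond this proof, for $k > r$, when multiple vertices of the cube involve diagonals of various thicknesses and the comparison with a product cube is no longer available.
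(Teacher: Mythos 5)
Your proof is correct and follows essentially the same path as the paper's: identify the derivative with the total fiber of the cube $S \mapsto \rConf(r-|S|,N)$, observe that all non-initial vertices are products of copies of $N$ so the punctured homotopy limit is $N^r$ by \refEx{ProductCubes1}, reduce the total fiber to $\hofiber(N^r\setminus\Delta \to N^r)$, and apply \refL{N-diagonal->N} followed by \refP{LayerAndDerivative}. There is no meaningful divergence from the paper's argument.
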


\begin{rem}\label{R:Convergence=>T_r->T_r-1}
If we knew that the Taylor tower converged, \refP{L_rrImmConnectivity} would in fact be immediate from what we know already.  As we know from \refR{ExpectSameNumber}, convergence would mean that the connectivity of $\rImm(M,N)\to T_r\rImm(M,N)$ is greater than that of $\rImm(M,N)\to T_{r-1}\rImm(M,N)$, which is $(r-1)n-rm-1$ from \refC{Connectivityk<r}.  We would thus have a diagram
$$
\xymatrix{
\rImm(M,N)\ar[rr]^-{> (r-1)n-rm-1} \ar[drr]^-{(r-1)n-rm-1} & &T_r\rImm(M,N)\ar[d] \\
& &T_{r-1}\rImm(M,N)
}
$$
and it would then follow from \refP{CompositionConnectivity} that $T_r\rImm(M,N)\to T_{r-1}\rImm(M,N)$ is $((r-1)n-rm-1)$-connected.  But this is the same as saying that the fiber of this map, namely $L_r\rImm(M,N)$, is $((r-1)n-rm-2)$-connected.
\end{rem}

\begin{proof}[Proof of \refP{L_rrImmConnectivity}]
We start as in the proof of \refP{L_krImmContractible}, and so  $\rImm^{(r)}(\emptyset,N)$ is the total fiber of the cubical diagram 
\begin{align*}
\calX \colon \calP(\underline r) & \longrightarrow \Top\\
S   & \longmapsto \rConf(r-|S|, N)
\end{align*}
with projections as before.  The initial space in the cube, by \refEx{rImmersionsExamples}, is
$$
\rConf(r-|\emptyset|, N)=\rConf(r, N)=N^r\setminus \Delta
$$
The other spaces are, as in the proof of \refP{L_krImmContractible}, $\rConf(r-|S|, N)=N^{r-|S|}$ since then $r-|S|<r$.  So for example, when $r=3$, the cube $\calX$ is equivalent to 
\begin{equation}
\xymatrix
@=5pt
{
N^3\setminus \Delta \ar[rr]\ar[dd]\ar[dr] & & N^2\ar[dr]\ar'[d][dd] &\\
 & N^2\ar[rr]\ar[dd] & & N\ar[dd]\\
N^2\ar'[r][rr]\ar[rd] & & N\ar[rd] &\\
 & N\ar[rr] & & \ast
}
\end{equation}

Thus  
$$
\rImm^{(r)}(\emptyset,N)=\tfiber\calX\simeq\hofiber\left(N^r\setminus \Delta\to \underset{S\in \calP_0(\underline r)}{\holim}N^{r-|S|}\right)
$$

However, the homotopy limit of the punctured cube on the right is $N^r$.  This follows from \refEx{ProductCubes1} by setting all $X_i=N$ (and $k=r$); that example says precisely that there is an equivalence
$$
N^r\stackrel{\sim}{\longrightarrow} \underset{S\in \calP_0(\underline r)}{\holim}N^{r-|S|}.
$$
So now we have reduced the problem to finding the connectivity of the inclusion
$$
N^r\setminus \Delta\longrightarrow N^r.
$$
But by \refL{N-diagonal->N}, this map is $((r-1)n-1)$-connected, which means that its homotopy fiber, namely 
$\rImm^{(r)}(\emptyset,N)$ is $((r-1)n-2)$-connected.  It then follows by \refP{LayerAndDerivative} that the connectivity of $L_r\rImm(M,N)$ is
$$
(r-1)n-2 - rm = (r-1)n- rm-2.
$$
\end{proof}

\begin{rem}
When $N=\R^n$, which is often of most interest, then the above proof simplifies as follows:
The initial space in the cube is 
$$
\rConf(r, \R^n)=(\R^n)^r\setminus \Delta\simeq S^{(r-1)n-1},
$$
while the other spaces are products of $\R^n$ with itself and hence contractible.  When $r=3$ for example, we want the total fiber of the cube
\begin{equation}
\xymatrix
@=10pt
{
S^{2n-1} \ar[rr]\ar[dd]\ar[dr] & & \ast\ar[dr]\ar'[d][dd] &\\
 & \ast\ar[rr]\ar[dd] & & \ast\ar[dd]\\
\ast\ar'[r][rr]\ar[rd] & & \ast\ar[rd] &\\
 & \ast\ar[rr] & & \ast
}
\end{equation}
But this total fiber is simply $S^{2n-1}$, which is $(2n-2)$-connected, or in the general case of $S^{(r-1)n-1}$, $((r-1)n-2)$-connected, as desired.
\end{rem}


\section{Some comments about the higher stages}\label{S:HigherStages}


We end by saying a few words about the strategy for finding the connectivities of the higher  layers $L_{k+1}\rImm(M,N)$, when $k+1>r$ (we are indexing by $k+1$ rather than $k$ since the numbers will ultimately come out easier that way).  These layers exhibit genuinely different and more difficult behavior.  We will focus on the case $N=\R^n$, as more is known about partial configuration spaces in this situation.  Details will appear in the thesis work of the second author.

As before, to get at the connectivity of $L_{k+1}\rImm(M,\R^n)$, 
one would first establish the connectivity of the derivative $\rImm^{(k+1)}(\emptyset,\R^n)$.  This is the total fiber of the cube
\begin{align*}
\calX \colon \calP(\underline{k+1}) & \longrightarrow \Top\\
S   & \longmapsto X_S=\rConf((k+1)-|S|, \R^n)
\end{align*}
with projection maps.
The total fiber is
\begin{equation}\label{E:r+1TotalFiber}
\rImm^{(k+1)}(\emptyset,\R^n)=\tfiber\calX\simeq \hofiber\left(X_\emptyset \to \underset{S\in \calP_0(\underline{k+1})}{\holim} X_S \right)
\end{equation}

\begin{rem}
In the case $k+1=r+1$, the homotopy limit in above is simply $(S^{(r-1)n-1})^{r+1}$ (using \refEx{ProductCubes2}).
The map from \eqref{E:r+1TotalFiber} whose homotopy fiber we wish to understand is therefore equivalent to the map
\begin{equation}\label{E:r+1ProductMap}
\rConf(r+1, \R^n) \longrightarrow (S^{(r-1)n-1})^{r+1}.
\end{equation}
One way to get the connectivity of this map is to look at the homology of the two spaces using \cite{GM:MorseTheory}. 
\end{rem}

The idea now is to  emulate the proof in the case of  $\Emb(M,\R^n)$ and use the Blakers-Massey Theorem for cubes \cite[Theorem 2.5]{CalcII} (see also \cite[Section 6.2]{MV:Cubes}).
There are two pieces that would be required: 
\begin{enumerate}
\item The connectivites of the projection maps between $r$-configuration spaces, and
\item  One would take the homotopy fibers in one direction in $\calX$ and then look at each square face of the resulting $k$-cube, i.e.~look at the top square in the diagram
{\small
$$
\xymatrix@=10pt{
 Y_\emptyset \ar[dd]\ar[dr]\ar[rr]    &           &   Y_1 \ar'[d][dd]           \ar[dr]  &                  \\
        &  Y_2 \ar[rr] \ar[dd]  &             & Y_{12}   \ar[dd] \\
  \rConf(l,\R^{n}) \ar[dd]\ar[dr]\ar'[r][rr]    &           &   \rConf(l-1,\R^{n}) \ar'[d][dd]           \ar[dr]  &                  \\
         &  \rConf(l-1,\R^{n}) \ar[rr] \ar[dd]  &             & \rConf(l-2,\R^{n})   \ar[dd] \\
 \rConf(l-1,\R^{n}) \ar'[r][rr] \ar[dr] &        &   \rConf(l-2,\R^{n})   \ar[dr] &                   \\
      &   \Conf(l-2,\R^{n}) \ar[rr]      &                    &   \rConf(l-3,\R^{n})
}
$$ 
}
for various $3\leq l\leq k+1$.  One that proves that each such square is \emph{homotopy cocartesian}, i.e.~the map from the homotopy colimit of the diagram obtained by removing the final space in the square to the final space is an equivalence.  
\end{enumerate}
The first item is taken care of by \refP{r+1R^n->rR^n}, but the second is more difficult.  In the case we are trying to mimic, that of $\Emb(M,\R^n)$, one is aided by the fact that the corresponding cube consists of ordinary configuration spaces and that the projection maps between them are fibrations.  The homotopy fibers are just fibers, and are easy to understand (they are wedges of spheres).  The squares of fibers turn out to be simple, built out of an intersection and a union (see \cite[Example 3.7.5]{MV:Cubes}).
%
%
%
But as we already discussed in \refS{Configurations}, the projections of $r$-configurations are not fibrations, so one has to examine their homotopy fibers directly.

For general $N$, the situation is even more difficult since we do not know much about the connectivity or the homology of the spaces $\rConf(k,N)$.

Furthermore, without knowing that the tower converges, the connectivity of $\rImm(M,\R^n)\to T_{k}\rImm(M,\R^n)$ is harder to obtain.  For embeddings, deep \emph{disjunction} results \cite{GK} (see also \cite[Section 10.3.2]{MV:Cubes} for an overview) are required, and generalizing these to $r$-immersions is likely difficult. 

\bibliographystyle{alpha}

\bibliography{/Users/ismar/Dropbox/Bibliography}

\end{document}